\documentclass{amsart}
\usepackage{amssymb}
\usepackage{amsmath}
\usepackage{amsfonts}
\usepackage{mathrsfs}
\newtheorem{thm}{Theorem}[section]

\newtheorem{lem}[thm]{Lemma}

\theoremstyle{definition}

\theoremstyle{remark}
\newtheorem{rem}[thm]{Remark}

\numberwithin{equation}{section}
\usepackage{enumitem}
\usepackage{romannum}

\begin{document}
	
	%
	%
	%
	%
	%
	%
	%
	%
	%

	\title[BSE- property of $L^1(G,\omega, \mathcal A)$]{On the BSE- property of vector valued Beurling algebra $L^1(G,\omega, \mathcal{A})$}

\author[J. J. Dabhi]{Jekwin J. Dabhi}
\address{Institute of Infrastructure Technology Research and Management (IITRAM), Ahmedabad - 380026, Gujarat, India}
\email{jekwin.13@gmail.com, jekwin.dabhi@iitram.ac.in}
\thanks{The first author is thankful  for  research support from CSIR HRDG, India (file no. 09/1274(13914)/2022-EMR-I) for the Senior Research Fellowship. The second author is grateful to the National Board for Higher Mathematics (NBHM), India, for the research grant (02011/39/2025/NBHM(R. P.)/R \&D II/16090)}
	
\author[P. A. Dabhi]{Prakash A. Dabhi}
\address{Institute of Infrastructure Technology Research and Management (IITRAM), Ahmedabad - 380026, Gujarat, India}
\email{lightatinfinite@gmail.com, prakashdabhi@iitram.ac.in}
\thanks{}
\subjclass{Primary 46J05; 46J10; 43A25}
	
\keywords{Commutative Banach algebra, Multiplier algebra, Projective and injective products of Banach algebras, BSE- algebra, Locally compact abelian group, weight, Beurling algebra}
	
	\date{}
	
\begin{abstract}
Let $G$ be a locally compact abelian group, and let $\omega:G \to [1,\infty)$ be a measurable weight, i.e., $\omega$ is measurable, and $\omega(s+t)\leq \omega(s)\omega(t)$ for all $s, t \in G$. Let $\mathcal{A}$ be a semisimple commutative Banach algebra with a predual $\mathcal A_\ast$ such that the Gel'fand space $\Phi_{\mathcal A}\subset \mathcal{A}_\ast$. If $\omega^{-1}$ is vanishing at infinity, then we show that the Banach algebra $L^1(G,\omega,\mathcal{A})$ is a BSE- algebra if and only if $\mathcal A$ is a BSE- algebra.
\end{abstract}

	\maketitle
	
\section{Introduction}

The study of multiplier algebras and their relationship with bounded continuous functions on the maximal ideal space has been a central theme in abstract harmonic analysis. To establish this framework formally, let $\mathcal A$ be a commutative Banach algebra. A nonzero linear map $\varphi:\mathcal A \to \mathbb C$ is a \emph{complex homomorphism} if $\varphi(ab)=\varphi(a)\varphi(b)$ for all $a, b \in \mathcal A$. Let $\Phi_{\mathcal A}$ be the collection of all complex homomorphisms on $\mathcal A$. For $a \in \mathcal A$, define $\widehat a:\Phi_{\mathcal A} \to \mathbb C$ by $\widehat a(\varphi)=\varphi(a)\;(\varphi \in \Phi_{\mathcal A})$. The smallest topology on $\Phi_{\mathcal A}$ such that each $\widehat a$, $a \in \mathcal A$, is continuous is the \emph{Gel'fand topology} on $\Phi_{\mathcal A}$, and $\Phi_{\mathcal A}$ with this topology is the \emph{Gel'fand space} of $\mathcal A$. The Gel'fand space $\Phi_{\mathcal A}$ is a locally compact Hausdorff space. Note that $\widehat a \in C_0(\Phi_{\mathcal A})$ for all $a \in \mathcal A$, where $C_0(\Phi_{\mathcal A})$ is the collection of all complex-valued continuous functions on $\Phi_{\mathcal A}$ vanishing at infinity. Let $\widehat{\mathcal A} = \{\widehat{a}: a \in \mathcal{A} \}$. The Banach algebra $\mathcal A$ is \emph{semisimple} if $\bigcap_{\varphi \in \Phi_{\mathcal A}}\ker \varphi=\{0\}$, where $\ker \varphi=\{a\in \mathcal A:\varphi(a)=0\}$.

Assume $\mathcal A$ is \emph{without order}, i.e., if $a \in \mathcal A$ and $a\mathcal A=\{0\}$, then $a=0$. Note that if $\mathcal A$ is semisimple, then it is without order. A map $T:\mathcal A \to \mathcal A$ is a \emph{multiplier} if $T(ab)=aTb=(Ta)b$ for all $a, b \in \mathcal A$. Let $M(\mathcal A)$ be the collection of all multipliers on $\mathcal A$. Then $M(\mathcal A)$ is a closed subalgebra of $B(\mathcal A)$, the Banach algebra of all bounded linear maps from $\mathcal A$ to $\mathcal A$ \cite[Proposition 1.4.11]{E}. The Banach algebra $M(\mathcal A)$ contains the identity $I(a)=a\;(a\in \mathcal A)$, and $\mathcal A$ is an ideal in $M(\mathcal A)$ via the identification $a \mapsto L_a$, where $L_a(b)=ab$ for all $b \in \mathcal A$. If $T \in M(\mathcal A)$, then, by \cite[Theorem 1.2.2]{L}, there is a unique bounded continuous function $\widehat T:\Phi_{\mathcal A}\to \mathbb C$ such that $\widehat{(Ta)} (\varphi)=\widehat T(\varphi)\widehat a(\varphi)$ for all $\varphi \in \Phi_{\mathcal A}$ and $a \in \mathcal A$. Let $\widehat {M(\mathcal A)}=\widehat M(\mathcal A)=\{\widehat T:T \in M(\mathcal A)\}$.

Let $C_b(\Phi_{\mathcal A})$ be the collection of all complex-valued bounded continuous functions on $\Phi_{\mathcal A}$. A function $\sigma\in C_b(\Phi_\mathcal{A})$ is a \emph{BSE-function} if for any finite choices of $c_1, c_2,\ldots, c_n \in \mathbb{C}$ and the same number of $\varphi_1,\varphi_2,\ldots,\varphi_n \in \Phi_\mathcal{A}$, the inequality
$$\left|\sum_{i=1}^n c_i \sigma(\varphi_i)\right|\leq C\left\|\sum_{i=1}^n c_i\varphi_i\right\|_{\mathcal A^\ast}$$
holds for some constant $C>0$. Let $C_{\text{BSE}}(\Phi_{\mathcal A})$ denote the collection of all BSE-functions. Then $C_{\text{BSE}}(\Phi_{\mathcal A})$ is a commutative Banach algebra with the norm $\|\sigma\|_{\text{BSE}}$, the infimum of all $C$ in the above inequality \cite[Lemma 1]{T}. We have $\widehat{\mathcal{A}} \subset C_{\text{BSE}}(\Phi_{\mathcal{A}})$. If $\mathcal{A}$ has a weak bounded approximate identity \cite{Jo}, then $\widehat{M(\mathcal{A})} \subset C_{\text{BSE}}(\Phi_{\mathcal{A}})$ \cite[Corollary 5]{T}. A commutative Banach algebra $\mathcal{A}$ is defined as a \emph{BSE-algebra} if $\widehat{M(\mathcal{A})} = C_{\text{BSE}}(\Phi_\mathcal{A})$ \cite{T}. The acronym BSE stands for \emph{Bochner-Schoenberg-Eberlein} and refers to the famous theorem proved by Bochner and Schoenberg \cite{B,S} for the additive group of real numbers that $L^1(\mathbb R)$ is a BSE-algebra, and by Eberlein \cite{E2} for any locally compact abelian group $G$ that $L^1(G)$ is a BSE-algebra. This concept was then generalized to arbitrary commutative Banach algebras by Takahasi and Hatori in \cite{T}.

To extend these concepts to group algebras, let $G$ be a locally compact abelian group with Haar measure $m$. Let
$$L^1(G) = \left\{f : G \rightarrow \mathbb{C}: f \text{ is measurable, }\|f\|_1=\int_{G} |f(s)| dm(s) < \infty \right\}.$$
For $f, g \in L^1(G)$, let
\begin{equation}\label{convolution}
(f\star g)(x)=\int_G f(y)g(x-y)dm(y)\quad(x \in G).
\end{equation}
By \cite[Theorem 1.1.6]{R}, $f\star g \in L^1(G)$ and $\|f\star g\|_1 \leq \|f\|_1\|g\|_1$. The \emph{Gel'fand space} of $L^1(G)$ is identified with $\widehat{G}$, the collection of all continuous homomorphisms from $G$ to $\mathbb{T}=\{z\in \mathbb C:|z|=1\}$ \cite[Theorem 2.7.2]{E}. The multiplier algebra of $L^1(G)$ is identified with $M(G)$, the Banach algebra of all regular complex Borel measures on $G$ \cite[Corollary 0.1.1]{L}.

A map $\omega:G \to [1,\infty)$ is a \emph{weight} \cite[Definition 1.6.1]{H} if $\omega$ is measurable, locally bounded, and $\omega(s+t)\leq \omega(s)\omega(t)$ for all $s,t \in G$. Let
$$L^1(G,\omega)=\left\{ f:G \to \mathbb C: f \text{ is measurable, }\|f\|_\omega=\int_G |f(s)|\omega(s)dm(s)<\infty \right\}.$$
Then $L^1(G,\omega)$ is a commutative Banach algebra, called a \emph{Beurling algebra}. Its multiplier algebra is isometrically isomorphic to $M(G,\omega)=\{\mu \in M(G):\|\mu\|_\omega =\int_G \omega d|\mu|<\infty\}$. The Gel'fand space $\Phi_{L^1(G,\omega)}$ is identified with $\widehat G_\omega$, the collection of all $\omega$-bounded characters on $G$.

To investigate vector-valued spaces, let $\mathcal{A}$ and $\mathcal{B}$ be commutative Banach algebras. The projective tensor product $\mathcal{A} \widehat{\otimes}_{\pi} \mathcal{B}$ is the completion of the algebraic tensor product $\mathcal{A} \otimes \mathcal{B}$ with respect to the projective norm $\|\cdot\|_{\pi}$. The injective tensor product $\mathcal{A} \widehat{\otimes}_{\epsilon} \mathcal{B}$ is the completion with respect to the injective norm $\|\cdot\|_\epsilon$. The Gel'fand space $\Phi_{\mathcal A \widehat{\otimes}_\pi \mathcal B}$ is identified with $\Phi_{\mathcal A}\times \Phi_{\mathcal B}$ \cite[Theorem 2.11.2]{E}.

For a unital commutative Banach algebra $\mathcal A$, let
$$L^1(G,\omega, \mathcal{A})=\left\{f:G \to \mathcal A: f \text{ is } m\text{-measurable,} \ \|f\|_\omega=\int_G \|f(s)\|\omega(s)dm(s)<\infty\right\}.$$
By \cite[Proposition 1.5.4]{E}, $L^{1}(G,\omega,\mathcal{A})$ is isomorphic to $L^1(G, \omega) \widehat{\otimes}_{\pi} \mathcal{A}$. The \emph{Gel'fand space} of $L^1(G,\omega,\mathcal{A})$ is identified with $\widehat{G}_\omega \times \Phi_{\mathcal{A}}$ \cite[Lemma 2.11.1]{E}. The multiplier algebra of $L^1(G,\omega,\mathcal{A})$ is identified with $M(G,\omega,\mathcal{A})$, the collection of all $\mathcal{A}$-valued regular measures on $G$ of bounded $\omega$-variation. A Banach space $\mathcal B$ is said to be a \emph{predual} of a Banach space $\mathcal A$ if the dual of $\mathcal B$ is $\mathcal A$. We denote a predual of $\mathcal A$ by $\mathcal A_\ast$.   If $\mathcal{A_\ast}$ is a predual of $\mathcal A$, let $C_0(G,\omega^{-1},\mathcal{A_\ast})$ be the set of all continuous functions $f:G \rightarrow \mathcal{A_\ast}$ such that $\frac{f}{\omega} \in C_0(G, \mathcal{A_\ast})$. Then $M(G,\omega,\mathcal{A})$ is the dual space of $C_0(G,\omega^{-1},\mathcal{A_\ast})$, and $C_0(G,\omega^{-1},\mathcal{A_\ast})$ is isomorphic to $C_0(G,\omega^{-1}) \widehat{\otimes}_\epsilon \mathcal{A_\ast}$ \cite[Proposition 1.5.6]{E}. (Note that setting $\omega \equiv 1$ yields the standard vector-valued group algebra $L^1(G,\mathcal A)$).

Recently, significant attention has shifted to the structural properties of these vector-valued function spaces. The authors in \cite{Re} claimed to have proved that for a unital semisimple commutative Banach algebra $\mathcal{A}$, the space $L^1(G,\mathcal{A})$ is a BSE-algebra if and only if $\mathcal{A}$ is a BSE-algebra. However, there is a fundamental gap in \textsection 3.1 of \cite{Re}. It is claimed there that a measure taking values in $(\text{span}(\Phi_{\mathcal A}))^\ast$ implies the measure is $C_{\text{BSE}}(\Phi_{\mathcal{A}})$-valued. This is not true in general. To demonstrate this, we recall that a topological Hausdorff space $X$ is \emph{stonean} if it is compact and the closure of every open set is open \cite[Definition 1.4.3]{DL}. Let $X$ be a non-empty, locally compact set, and let $\mu \in M(X)$. Then $\mu$ is \emph{normal} if $\langle f_\alpha, \mu \rangle=\int_X f_\alpha d\mu \rightarrow 0$ for each net $( f_\alpha)_{\alpha \in \Lambda}$ in $(C_{0}(X)^+, \leq)$ such that $f_{\alpha} \searrow 0$ in the lattice \cite[Definition 4.7.1]{DL}. Let $N(X)$ denote the collection of all normal measures on $X$. A non-empty, compact, Hausdorff space is \emph{hyperstonean} if it is stonean and $W_X = \bigcup\{\text{supp}(\mu) : \mu \in N(X)\}$ is dense in $X$ \cite[Definition 5.1.1]{DL}. By \cite[Theorem 6.4.1]{DL}, $C(X)$ has a predual if and only if $X$ is hyperstonean. Now, consider $\mathcal A = C(X)$, where $X$ is a compact, Hausdorff, but non-hyperstonean space. If $\text{span}(\Phi_\mathcal{A})^\ast$ were a subset of $C_{\text{BSE}}(\Phi_\mathcal A) = \mathcal A$, then $\mathcal A$ would have a predual (namely $\text{span}(\Phi_\mathcal{A})$), yielding a contradiction.

In this paper, we resolve this gap by introducing a precise structural requirement on the predual of the coefficient algebra, and we extend this transferability to the much broader class of weighted Beurling algebras $L^1(G, \omega, \mathcal{A})$. We proved in \cite{J} that if $\omega$ is a weight on $G$ such that $\omega^{-1}$ vanishes at infinity, then $L^1(G,\omega)$ is a BSE-algebra. Here, we demonstrate that imposing this asymptotic growth condition on the weight effectively neutralizes the topological obstructions of the group, allowing the BSE property to transfer perfectly to the vector-valued setting.

Our main results are as follows:
	
\begin{thm} \label{B1}
Let $\mathcal{A}$ be a semisimple commutative Banach algebra, let $\mathcal A$ have a predual $\mathcal A_\ast$ such that $\Phi_{\mathcal A} \subset \mathcal{A}_\ast$, and let $G$ be a locally compact abelian group. Then $L^1(G, \mathcal{A})$ is a BSE-algebra if and only if $\mathcal{A}$ is a BSE-algebra.
\end{thm}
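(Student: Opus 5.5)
We work throughout with the identifications recalled in the introduction: $L^1(G,\mathcal A)\cong L^1(G)\widehat\otimes_\pi\mathcal A$, its Gel'fand space is $\widehat G\times\Phi_{\mathcal A}$, and its multiplier algebra is $M(G,\mathcal A)=C_0(G,\mathcal A_\ast)^\ast$. The Gel'fand transform of $\mu\in M(G,\mathcal A)$ is
$$\widehat\mu(\gamma,\varphi)=\int_G \overline{\gamma(s)}\,d(\varphi\circ\mu)(s).$$
This makes sense because $\varphi\in\Phi_{\mathcal A}\subset\mathcal A_\ast$, so $\varphi\circ\mu$ is a scalar measure, obtained by pairing $\mu$ with $\gamma\otimes\varphi\in C_0(G,\mathcal A_\ast)$. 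Note also that $L^1(G,\mathcal A)$ has a bounded approximate identity if $\mathcal A$ does. In any case, in the sense of the paper, "BSE" includes $\widehat M\subset C_{\rm BSE}$, so the substantive content is the reverse inclusion.

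\emph{($\Rightarrow$).} Suppose $L^1(G,\mathcal A)$ is BSE and let $\sigma\in C_{\rm BSE}(\Phi_{\mathcal A})$. Define $\tau(\gamma,\varphi)=\sigma(\varphi)$ if $\gamma$ is the trivial character and $0$ otherwise. This is not continuous, so instead we use $\tau(\gamma,\varphi)=\widehat f(\gamma)\sigma(\varphi)$ with $f\in L^1(G)$ fixed and $\widehat f(\gamma_0)=1$ at some $\gamma_0$. For $\sum c_i(\gamma_i\otimes\varphi_i)$ we need
$$\Bigl|\sum c_i\widehat f(\gamma_i)\sigma(\varphi_i)\Bigr|\le C\Bigl\|\sum c_i\,\gamma_i\otimes\varphi_i\Bigr\|.$$
To get this, apply the functional $\Lambda=\sum c_i\gamma_i\otimes\varphi_i\in(L^1(G)\widehat\otimes_\pi\mathcal A)^\ast$ to elements $f\otimes a$. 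This gives $\|\sum c_i\widehat f(\gamma_i)\varphi_i\|_{\mathcal A^\ast}\le\|f\|_1\|\Lambda\|$. Grouping equal $\varphi_i$, the BSE inequality for $\sigma$ then yields the bound with $C=\|\sigma\|_{\rm BSE}\|f\|_1$. Hence $\tau=\widehat\mu$ for some $\mu\in M(G,\mathcal A)$. Fix $\gamma_0$ and set $Ta=\int_G\gamma_0\,d\mu\cdot a$, suitably interpreted via $\langle \int\gamma_0 d\mu,\varphi\rangle=\widehat\mu(\gamma_0,\varphi)$; more cleanly, $T=$ the multiplier $a\mapsto$ the element of $\mathcal A=(\mathcal A_\ast)^\ast$ given by $\psi\mapsto\int_G\overline{\gamma_0}\,d(\psi\circ(\mu a))$. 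Then $\widehat T=\sigma$, so $\mathcal A$ is BSE.

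\emph{($\Leftarrow$).} Suppose $\mathcal A$ is BSE and let $\tau\in C_{\rm BSE}(\widehat G\times\Phi_{\mathcal A})$. We proceed in three steps.

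\emph{Step 1 (slices).} For fixed $\varphi$, testing the BSE inequality on functionals $\sum c_i\gamma_i\otimes\varphi$, whose norm in $L^1(G,\mathcal A)^\ast=L^\infty(G,\mathcal A^\ast)$ is at most $\|\sum c_i\gamma_i\|_\infty$, shows that $\tau(\cdot,\varphi)\in C_{\rm BSE}(\widehat G)$. By Eberlein's theorem, $\tau(\cdot,\varphi)=\widehat{\mu_\varphi}$ with $\mu_\varphi\in M(G)$ and $\|\mu_\varphi\|\le\|\tau\|_{\rm BSE}$. Symmetrically, each $\tau(\gamma,\cdot)\in C_{\rm BSE}(\Phi_{\mathcal A})=\widehat M(\mathcal A)$.

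\emph{Step 2 (building a bounded functional on $C_0(G,\mathcal A_\ast)$).} This is the key step and where the hypothesis $\Phi_{\mathcal A}\subset\mathcal A_\ast$ is used. On the algebraic tensor product, define
$$L\Bigl(\sum_j g_j\otimes\psi_j\Bigr)=\sum_j\langle g_j,\cdot\rangle\ \text{paired through }\tau.$$
Concretely, for trigonometric-type elements $g=\sum c_i\gamma_i\otimes\varphi_i$ with $\gamma_i\in\widehat G$ and $\varphi_i\in\Phi_{\mathcal A}$, set $L(g)=\sum c_i\tau(\gamma_i,\varphi_i)$. The BSE inequality gives $|L(g)|\le\|\tau\|_{\rm BSE}\|g\|_{L^\infty(G,\mathcal A^\ast)}$, and for continuous $g$ this sup-norm coincides with the $C_b(G,\mathcal A_\ast)$-norm. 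We then extend by Hahn–Banach to $C_b(G,\mathcal A_\ast)$ and restrict to $C_0(G,\mathcal A_\ast)$ to obtain a candidate $\mu\in M(G,\mathcal A)$.

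\emph{Step 3 (identifying the transform).} The main obstacle is that $\gamma\otimes\varphi$ lies in $C_b$, not in $C_0$, so the restriction need not reproduce $\tau$. I would handle this as in the scalar Eberlein proof. First, for $f\in L^1(G)$ and $a\in\mathcal A$, show that $\tau\cdot\widehat{(f\otimes a)}$ is a Gel'fand transform by approximating $f$ by compactly supported functions and using the density of $\mathrm{span}\{\gamma_i\otimes\varphi_i\}$, in weak$^\ast$ sense, against $C_c(G)\otimes\mathcal A_\ast$; this is where $\Phi_{\mathcal A}$ spanning a weak$^\ast$-dense subspace of $\mathcal A_\ast$ is needed, which follows from semisimplicity together with $\Phi_{\mathcal A}\subset\mathcal A_\ast$. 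This yields a map $T:L^1(G,\mathcal A)\to L^1(G,\mathcal A)$ with $\widehat{Tx}=\tau\widehat x$. Second, $T$ is a multiplier because the transform is injective, by semisimplicity. Finally, boundedness of $T$ follows from the closed graph theorem, and $\widehat T=\tau$.
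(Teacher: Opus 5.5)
\textbf{The main gap is in your ($\Leftarrow$) direction, and it sits exactly at the crux.}

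Your Step 2 bound $|L(g)|\le\|\tau\|_{\mathrm{BSE}}\sup_{s\in G}\|g(s)\|$ is the right estimate. It is the paper's key inequality $\|\sum_k c_k\varphi_{\gamma_k x_k}\|_{L^1(G,\mathcal A)^\ast}\le\|\sum_k c_kF_{(\gamma_k,x_k)}\|_\epsilon$. The trouble is what you do with it. Once you extend to $C_b(G,\mathcal A_\ast)$ and restrict to $C_0(G,\mathcal A_\ast)$, the resulting $\mu$ can genuinely fail to reproduce $\tau$. For $G=\mathbb R$, $\mathcal A=\mathbb C$, $\tau\equiv 1$, take an ultrafilter limit of point evaluations along a net that tends to $0$ in the Bohr topology but to $\infty$ in $\mathbb R$. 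This is a norm-one extension that vanishes on $C_0(\mathbb R)$.

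Step 3 does not repair this; it restates the goal. That $\tau\cdot\widehat{(f\otimes a)}$ is the transform of an element of $L^1(G,\mathcal A)$ is essentially the theorem itself; for $\mathcal A=\mathbb C$ it is Eberlein's theorem. The tools you name do not give it:
\begin{itemize}
\item The BSE inequality only makes $\gamma\otimes\varphi\mapsto\tau(\gamma,\varphi)\widehat f(\gamma)\varphi(a)$ norm-bounded on a weak$^\ast$-dense span in $L^1(G,\mathcal A)^\ast$. Representing this functional by an element of $L^1(G,\mathcal A)$ would require weak$^\ast$-continuity, which is exactly what has to be proved.
\item The density of $\mathrm{span}\,\Phi_{\mathcal A}$ in $\mathcal A_\ast$ is true (even in norm, by semisimplicity), but it only lets you pass from $\Phi_{\mathcal A}$ to $\mathcal A_\ast$.
\item Gluing the slices of Step 1 does not help either. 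The map $\varphi\mapsto\mu_\varphi$ extends to a bounded operator $\mathcal A_\ast\to M(G)$, i.e.\ an element of $(C_0(G)\widehat{\otimes}_\pi\mathcal A_\ast)^\ast$. That is not an element of $(C_0(G)\widehat{\otimes}_\epsilon\mathcal A_\ast)^\ast=M(G,\mathcal A)$.
\end{itemize}

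\textbf{The missing idea is the paper's.} Extend not to $C_b(G,\mathcal A_\ast)$ but to $C(\overline G,\mathcal A_\ast)$, where $\overline G$ is the Bohr compactification. The functions $F_{(\gamma,x)}(s)=\overline{\gamma(s)}x$ live there, and their sup norms over $G$ and over $\overline G$ coincide. So your Step 2 bound, Hahn--Banach and the vector-valued Riesz theorem give $\mu\in M(\overline G,\mathcal A)$ with $\tau(\gamma,\psi)=\psi\bigl(\int_{\overline G}\overline{\gamma}\,d\mu\bigr)$ for all $\gamma\in\widehat G$, $\psi\in\Phi_{\mathcal A}$; nothing is lost. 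Your Step 1 then finishes the argument. The measure $\psi\circ\mu\in M(\overline G)$ has Fourier--Stieltjes coefficients on $\widehat G$ given by the BSE function $\tau(\cdot,\psi)$. By Eberlein's theorem and uniqueness it is concentrated on $G$, and semisimplicity transfers this to $\mu$.

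\textbf{Secondary gaps in the other inclusions.}
\begin{itemize}
\item Your ($\Rightarrow$) argument proves only $C_{\mathrm{BSE}}(\Phi_{\mathcal A})\subset\widehat M(\mathcal A)$. Under $\Phi_{\mathcal A}\subset\mathcal A_\ast$ this is automatic: by Lemma~\ref{2}, a BSE function extends to $\mathcal A_\ast$ and hence lies in $\widehat{\mathcal A}$.
\item The reverse inclusion $\widehat M(\mathcal A)\subset C_{\mathrm{BSE}}(\Phi_{\mathcal A})$ is not part of the definition. It is equivalent to a bounded weak approximate identity and must be derived. For example, take a bounded weak approximate identity $(u_\alpha)$ of $L^1(G,\mathcal A)$ and set $a_\alpha=\int_G\overline{\gamma_0(s)}\,u_\alpha(s)\,ds$.
\item In ($\Leftarrow$) you should likewise note that, here, $\mathcal A$ being BSE forces $\mathcal A$ to be unital. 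That is what gives $L^1(G,\mathcal A)$ a bounded approximate identity.
\end{itemize}
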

	
\begin{thm}\label{B22}
Let $G$ be a locally compact abelian group, let $\omega$ be a weight on $G$ such that $\omega^{-1}$ vanishes at infinity, and let $\mathcal{A}$ be a semisimple commutative Banach algebra possessing a predual $\mathcal A_\ast$ such that $\Phi_{\mathcal A} \subset \mathcal{A}_\ast$. Then $L^1(G,\omega,\mathcal{A})$ is a BSE-algebra if and only if $\mathcal A$ is a BSE-algebra.
\end{thm}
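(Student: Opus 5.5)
We prove the two implications separately. The backward implication (from $\mathcal A$ to $L^1(G,\omega,\mathcal A)$) is the substantial one; the forward implication is a restriction argument.

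\textbf{Forward direction.} Assume $L^1(G,\omega,\mathcal A)$ is a BSE-algebra and let $\sigma\in C_{\text{BSE}}(\Phi_{\mathcal A})$.

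1. Fix $f_0\in L^1(G,\omega)$ with $\widehat{f_0}$ vanishing nowhere on a prescribed neighbourhood. It is cleanest to use a character $\chi_0\in\widehat G_\omega$ and a suitable $f_0$ with $\widehat{f_0}(\chi_0)=1$.

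2. Define $\tau(\chi,\varphi)=\widehat{f_0}(\chi)\sigma(\varphi)$ on $\widehat G_\omega\times\Phi_{\mathcal A}$. This is a BSE-function, with BSE-norm at most $\|f_0\|_\omega\|\sigma\|_{\text{BSE}}$. To see this, bound
$$\Bigl|\sum c_i\widehat{f_0}(\chi_i)\sigma(\varphi_i)\Bigr|$$
by testing $\sum c_i\chi_i\otimes\varphi_i$ against $f_0\otimes a$. Here one uses that the dual of the projective tensor product is $B(L^1(G,\omega),\mathcal A^\ast)$, and one applies the BSE-inequality for $\sigma$ to the functionals $\sum_i c_i\widehat{f_0}(\chi_i)\varphi_i$ grouped by $\varphi_i$.

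3. By hypothesis, $\tau=\widehat T$ for some multiplier $T$.

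4. Slice at $\chi_0$ to obtain a multiplier of $\mathcal A$ whose Gel'fand transform is $\sigma$. Concretely, set $S(a)=$ the $\mathcal A$-component of $T(f\otimes a)$ paired against $\chi_0$, normalized, for any $f$ with $\widehat f(\chi_0)=1$. Semisimplicity of $\mathcal A$ and of $L^1(G,\omega,\mathcal A)$ makes $S$ well defined and gives $\widehat S=\sigma$.

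\textbf{Backward direction.} Assume $\mathcal A$ is a BSE-algebra and let $\sigma\in C_{\text{BSE}}(\widehat G_\omega\times\Phi_{\mathcal A})$ with $\|\sigma\|_{\text{BSE}}\le C$.

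1. \emph{A bounded functional.} Define $\Lambda$ on the linear span of $\{\chi\otimes\varphi\}$, viewed inside $C_0(G,\omega^{-1})\widehat\otimes_\epsilon\mathcal A_\ast$, by
$$\Lambda\Bigl(\sum c_i\chi_i\otimes\varphi_i\Bigr)=\sum c_i\sigma(\chi_i,\varphi_i).$$
This uses $\Phi_{\mathcal A}\subset\mathcal A_\ast$. Since $\omega^{-1}\in C_0$, each $\omega$-bounded character $\chi$ lies in $C_0(G,\omega^{-1})$, so $\chi\otimes\varphi\in C_0(G,\omega^{-1},\mathcal A_\ast)$. The BSE-inequality, together with the fact that the norm of $\sum c_i\chi_i\otimes\varphi_i$ in the dual of $L^1(G,\omega,\mathcal A)$ equals its sup-norm $\sup_s\|\sum c_i\chi_i(s)\varphi_i\|/\omega(s)$ (the injective/$C_0$ norm), shows that $\Lambda$ is bounded by $C$.

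2. \emph{Extension.} The key density claim is that the span of $\{\chi\otimes\varphi\}$ is weak$^\ast$-suitably dense, and in fact it suffices to extend by Hahn–Banach. Hahn–Banach extends $\Lambda$ to an element $\mu\in C_0(G,\omega^{-1},\mathcal A_\ast)^\ast=M(G,\omega,\mathcal A)$ with $\|\mu\|_\omega\le C$ and $\langle\chi\otimes\varphi,\mu\rangle=\sigma(\chi,\varphi)$.

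3. \emph{Conclusion.} Because $\mu$ is an $\mathcal A$-valued measure of bounded $\omega$-variation, $\widehat\mu(\chi,\varphi)=\varphi\bigl(\int\overline{\chi}\,d\mu\bigr)=\sigma(\chi,\varphi)$, with the character convention fixed appropriately. Since $M(G,\omega,\mathcal A)$ is the multiplier algebra, $\sigma\in\widehat M$, and so $L^1(G,\omega,\mathcal A)$ is a BSE-algebra.

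\textbf{Main obstacle.} The main difficulty is Step 1 of the backward direction: identifying the $L^1(G,\omega,\mathcal A)^\ast$-norm of $\sum c_i\chi_i\otimes\varphi_i$ with the $C_0(G,\omega^{-1},\mathcal A_\ast)$-norm. This is where the hypotheses $\omega^{-1}\in C_0$ and $\Phi_{\mathcal A}\subset\mathcal A_\ast$ are used, replacing the flawed step in \cite{Re}.

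One remark on the role of the hypothesis that $\mathcal A$ is BSE. In the argument above it appears not to be needed for this direction, which is suspicious. One should re-check whether the identification of the Gel'fand transform requires $\mu$ to be $C_{\text{BSE}}$-valued. If so, apply the BSE property of $\mathcal A$ fiberwise: for fixed $\chi$, the map $\varphi\mapsto\sigma(\chi,\varphi)$ is in $C_{\text{BSE}}(\Phi_{\mathcal A})=\widehat M(\mathcal A)$. This places the values in $M(\mathcal A)$, which is needed to realize $\mu$ as a multiplier when $\mathcal A$ is nonunital.
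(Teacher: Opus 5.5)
\textbf{Gap 1: the inclusion $\widehat M\subset C_{\text{BSE}}$ is missing in both directions.} A BSE-algebra requires $\widehat M=C_{\text{BSE}}$, but you only prove $C_{\text{BSE}}\subset\widehat M$. The reverse inclusion amounts to a bounded weak approximate identity, and you never address it.

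This explains why your backward argument never uses that $\mathcal A$ is BSE. It also means your closing remark looks for the hypothesis in the wrong place. Convolution by any $\mu\in M(G,\omega,\mathcal A)$ is a multiplier of $L^1(G,\omega,\mathcal A)$ with transform $\widehat\mu$ whether or not $\mathcal A$ is unital, so nothing fiberwise is needed there.

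In the forward direction, the inclusion you prove is automatic. The first half of the proof of Lemma~\ref{2} (Hahn--Banach on $\mathrm{span}\,\Phi_{\mathcal A}\subset\mathcal A_\ast$) gives $C_{\text{BSE}}(\Phi_{\mathcal A})\subset\widehat{\mathcal A}$ from the standing hypotheses alone. So slicing $\tau$ adds nothing; what has to be shown is $\widehat M(\mathcal A)\subset C_{\text{BSE}}(\Phi_{\mathcal A})$.

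The paper supplies both missing inclusions. If $\mathcal A$ is BSE, it is unital by Lemma~\ref{2}, so $L^1(G,\omega,\mathcal A)$ has a bounded approximate identity and Takahasi--Hatori's Corollary~5 gives $\widehat M\subset C_{\text{BSE}}$. Conversely, a BSE-algebra has a weak bounded approximate identity, this passes to $\mathcal A$, and Lemma~\ref{2} applies.

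Your slicing idea does repair the forward direction if you apply it to the constant function. We have $1=\widehat I\in C_{\text{BSE}}(\widehat G_\omega\times\Phi_{\mathcal A})$ and, for $\chi_0\in\widehat G$, $\bigl\|\sum_i c_i\,\chi_0\otimes\varphi_i\bigr\|_{L^1(G,\omega,\mathcal A)^\ast}\le\bigl\|\sum_i c_i\varphi_i\bigr\|_{\mathcal A^\ast}$. Hence $1\in C_{\text{BSE}}(\Phi_{\mathcal A})\subset\widehat{\mathcal A}$, semisimplicity makes $\mathcal A$ unital, and then $\widehat M(\mathcal A)=\widehat{\mathcal A}\subset C_{\text{BSE}}(\Phi_{\mathcal A})$.

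\textbf{Gap 2: non-unitary characters.} Your assertion that every $\omega$-bounded character lies in $C_0(G,\omega^{-1})$ is false. The quotient $|\chi|/\omega$ vanishes at infinity when $|\chi|=1$, but not in general. For example, $G=\mathbb Z$, $\omega(n)=e^{|n|}$, $\chi(n)=e^{n}$ gives $|\chi(n)|/\omega(n)=1$ for $n\ge 0$; here $\widehat G_\omega$ is an annulus.

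So $\Lambda$ can only be defined on the span of $\chi\otimes\varphi$ with $\chi\in\widehat G$, and Hahn--Banach then yields $\mu\in M(G,\omega,\mathcal A)$ with $\widehat\mu=\sigma$ only on $\widehat G\times\Phi_{\mathcal A}$. The paper's proof also only matches $\sigma$ with $\widehat u$ there, so an extra step is needed in either route. One option:
\begin{itemize}
\item Write $\chi=\gamma e^{\rho}$ with $\gamma\in\widehat G$ and $\rho:G\to\mathbb R$ a continuous homomorphism. Then $\gamma e^{z\rho}\in\widehat G_\omega$ for $0\le\mathrm{Re}\,z\le 1$.
\item The function $z\mapsto(\sigma-\widehat\mu)(\gamma e^{z\rho},\varphi)$ is bounded and continuous on this closed strip.
\item It is holomorphic inside the strip: for $\sigma$ because BSE functions are bounded pointwise limits of Gel'fand transforms (use normal families), and for $\widehat\mu$ by dominated convergence.
\item It vanishes on $\mathrm{Re}\,z=0$, hence is identically zero.
\end{itemize}

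\textbf{What your route buys.} With these repairs, your backward direction is a genuinely shorter route than the paper's. You place the unitary characters directly in the predual $C_0(G,\omega^{-1},\mathcal A_\ast)$ of $M(G,\omega,\mathcal A)$ and use the elementary estimate $\|F\|_{L^1(G,\omega,\mathcal A)^\ast}\le\sup_s\|F(s)\|/\omega(s)$. That is the only inequality you need, so your announced main obstacle is not one. This replaces Theorem~\ref{B1} with its Bohr-compactification argument, and also the Fourier-inversion density lemmas the paper uses to show that $u$ has finite $\omega$-variation.
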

	
\begin{rem}
If $\omega$ is a weight on a locally compact abelian group $G$, then, by \cite[Theorem 3.7.5]{H}, there is a continuous weight $\omega'$ on $G$ such that $L^1(G,\omega)$ and $L^1(G,\omega')$ are isomorphic as Banach algebras. This implies that $L^1(G,\omega,\mathcal A)$ and $L^1(G,\omega',\mathcal A)$ are isomorphic as Banach algebras for any commutative Banach algebra $\mathcal A$. Consequently, $L^1(G,\omega,\mathcal A)$ is a BSE-algebra if and only if $L^1(G,\omega',\mathcal A)$ is a BSE-algebra. Thus, we shall assume throughout the remainder of this paper that the weight $\omega$ is continuous.
\end{rem}

\section{$L^1(G,\mathcal{A})$ is a BSE- algebra.}
Let $G$ be a locally compact abelian group, and let $\mathcal{A}$ be a  unital  Banach algebra. Let $\overline{G}$ be the Bohr compactification of $G$. Then $G$ is a dense subset of $\overline{G}$ and also $G$ is continuously embedded in $\overline{G}$. For more details on Bohr compactification of a locally compact abelian group please see \cite[\textsection 1.8]{R}. 
Note that if $\mathcal{A}$ and $\mathcal{B}$ are  commutative Banach algebras, then $\mathcal{A}$ and $\mathcal{B}$ have bounded weak approximate identities if and only if $\mathcal{A} \widehat{\otimes}_\pi \mathcal{B}$ has a bounded weak approximate identity.
	
	\begin{lem}\label{2}
		Let $\mathcal{A}$ be a commutative Banach algebra without order, and let $\mathcal{A}$ have a predual $\mathcal{A}_\ast$ such that $\Phi_\mathcal A \subset \mathcal{A}_\ast$. Then the following are equivalent.
		\begin{enumerate}[label=(\roman*)]
			\item $\mathcal{A}$ has a weak bounded approximate identity.
			\item $\mathcal{A}$ is a BSE-algebra.
			\item $\mathcal{A}$ is unital.
		\end{enumerate}
	\end{lem}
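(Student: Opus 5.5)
The plan is to show that, under the predual hypothesis, every BSE-function is already the Gel'fand transform of an element of $\mathcal A$, that is, $C_{\text{BSE}}(\Phi_{\mathcal A})=\widehat{\mathcal A}$. All three equivalences then follow quickly. I will use the canonical isometric embedding $\mathcal A_\ast\hookrightarrow(\mathcal A_\ast)^{\ast\ast}=\mathcal A^\ast$. Because of it, for $\varphi_1,\dots,\varphi_n\in\Phi_{\mathcal A}\subset\mathcal A_\ast$ the norm $\|\sum c_i\varphi_i\|_{\mathcal A^\ast}$ equals the norm of $\sum c_i\varphi_i$ in $\mathcal A_\ast$. Moreover, the pairing $\langle a,\varphi\rangle$ of $a\in\mathcal A=(\mathcal A_\ast)^\ast$ with $\varphi\in\mathcal A_\ast$ is just $\varphi(a)$.

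\emph{Key step.} Let $\sigma\in C_{\text{BSE}}(\Phi_{\mathcal A})$. On the subspace $\text{span}(\Phi_{\mathcal A})\subset\mathcal A_\ast$ define $\Lambda(\sum c_i\varphi_i)=\sum c_i\sigma(\varphi_i)$. The BSE inequality $|\sum c_i\sigma(\varphi_i)|\le\|\sigma\|_{\text{BSE}}\|\sum c_i\varphi_i\|$ gives two things at once. First, $\Lambda$ is well defined, since a combination equal to $0$ is sent to $0$. Second, $\Lambda$ is bounded with respect to the $\mathcal A_\ast$-norm. By Hahn--Banach, $\Lambda$ extends to a bounded functional on $\mathcal A_\ast$, so it is given by some $a_\sigma\in(\mathcal A_\ast)^\ast=\mathcal A$. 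Then $\varphi(a_\sigma)=\sigma(\varphi)$ for every $\varphi\in\Phi_{\mathcal A}$, i.e.\ $\sigma=\widehat{a_\sigma}$. Together with the standing inclusion $\widehat{\mathcal A}\subset C_{\text{BSE}}(\Phi_{\mathcal A})$, this proves $C_{\text{BSE}}(\Phi_{\mathcal A})=\widehat{\mathcal A}$.

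\emph{The implications.} For (iii)$\Rightarrow$(i), the unit is itself a bounded (weak) approximate identity. For (i)$\Rightarrow$(ii), \cite[Corollary 5]{T} gives $\widehat{M(\mathcal A)}\subset C_{\text{BSE}}(\Phi_{\mathcal A})$. Conversely, the key step shows $C_{\text{BSE}}(\Phi_{\mathcal A})=\widehat{\mathcal A}\subset\widehat{M(\mathcal A)}$, via $a\mapsto L_a$. Hence $\mathcal A$ is a BSE-algebra. For (ii)$\Rightarrow$(iii), the identity multiplier $I$ has $\widehat I\equiv1$. So $1\in\widehat{M(\mathcal A)}=C_{\text{BSE}}(\Phi_{\mathcal A})=\widehat{\mathcal A}$, and there is $e\in\mathcal A$ with $\varphi(e)=1$ for all $\varphi\in\Phi_{\mathcal A}$. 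Then, for all $a\in\mathcal A$ and $\varphi\in\Phi_{\mathcal A}$, we have $\varphi(ea-a)=0$.

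\emph{Expected obstacle.} The difficulty is the last step: passing from $\widehat{ea}=\widehat a$ to $ea=a$. This requires the Gel'fand transform to be injective, i.e.\ semisimplicity, and not merely that $\mathcal A$ is without order. My plan is to invoke semisimplicity, which is assumed in both main theorems where this lemma will be used. If only ``without order'' is to be used, I would instead try to show $L_e=I$ directly. The idea is that $\widehat{L_e}=\widehat I$, and the correspondence $T\mapsto\widehat T$ is injective on $M(\mathcal A)$ (as in \cite[Theorem 1.2.2]{L}). I would check whether that uniqueness statement really holds without semisimplicity, or whether the hypothesis must be strengthened.

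A similar remark applies to a direct route (i)$\Rightarrow$(iii). Let $e$ be a weak$^\ast$ cluster point in $\mathcal A=(\mathcal A_\ast)^\ast$ of a bounded weak approximate identity, which exists by Banach--Alaoglu. Since each $\varphi\in\Phi_{\mathcal A}$ lies in $\mathcal A_\ast$, weak$^\ast$ convergence gives $\varphi(e)=1$. So this route leads to the same injectivity issue as the one above.
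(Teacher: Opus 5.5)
Your argument is essentially the paper's. Hahn--Banach on $\mathrm{span}(\Phi_{\mathcal A})$ inside the predual gives $C_{\text{BSE}}(\Phi_{\mathcal A})=\widehat{\mathcal A}$. Takahasi--Hatori's Corollary 5 then gives (i)$\Rightarrow$(ii), and $\widehat I\equiv 1\in\widehat{\mathcal A}$ gives (ii)$\Rightarrow$(iii). Extending directly inside $\mathcal A_\ast$, as you do, is slightly cleaner than the paper's detour through $(\mathcal A^\ast)^\ast$.

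The obstacle you flag is real. The paper's step ``$\widehat{M(\mathcal A)}=\widehat{\mathcal A}$, that is $\mathcal A$ is unital'' passes over it silently, and under ``without order'' alone the lemma is false. Take $N=\{f\in\ell^1(\mathbb Z^+,\omega):f(0)=0\}$ with convolution and the radical weight $\omega(n)=e^{-n^2}$. Then $N$ is a nonzero radical algebra without zero divisors, it is the dual of $c_0(\mathbb N,\omega^{-1})$, and it has no unit (all products in $N$ vanish at $n=1$). Now let $\mathcal A=\mathbb C\oplus N$ with the max norm. It is without order, and it has a predual containing its only character $\pi_1(c,f)=c$. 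It satisfies (i), via the constant net $(1,0)$. It satisfies (ii), since $\Phi_{\mathcal A}$ is a single point and so $C_{\text{BSE}}(\Phi_{\mathcal A})=\mathbb C=\widehat{M(\mathcal A)}$. Yet $\mathcal A$ is not unital.

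In the same example $\widehat{L_{(1,0)}}=\widehat I$ although $L_{(1,0)}\neq I$, so your fallback via uniqueness of $\widehat T$ cannot work either. Assuming semisimplicity, as both main theorems do, is the right fix, and with it your proof is complete.
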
	
	\begin{proof} $(\mathrm{i}) \Rightarrow (\mathrm{ii})$
		 Let $\sigma \in C_{\text{BSE}}(\Phi_\mathcal{A})$. Then $\sigma$ extends to a continuous linear functional on $\text{span}(\Phi_\mathcal{A})$. As $\text{span}(\Phi_\mathcal{A}) \subset \mathcal{A}^\ast$, the Hahn-Banach extension theorem implies that  $\sigma$ extends as a continuous linear functional on $\mathcal{A^\ast}$. As $\mathcal A_\ast$ is canonically embedded into its second dual $\mathcal A^\ast$, $\sigma$ is also continuous linear function on $\mathcal A_\ast$.   Therefore $\sigma \in (\mathcal{A_\ast})^\ast = \mathcal{A}$,  that is $\sigma(x) = a(x)$ for all $x \in \mathcal A_\ast$ for some $a \in \mathcal{A}$. In particluar,  $\sigma(\varphi)=a(\varphi)=\widehat a(\varphi)$ for all $\varphi \in \Phi_{\mathcal A}$ as $\Phi_\mathcal A \subset \mathcal{A}_\ast$. Hence  $C_{\text{BSE}}(\Phi_\mathcal{A}) \subset \widehat{\mathcal{A}}$. Since $\mathcal A$ has a weak bounded approximate identity, we have  $\widehat{M(\mathcal{A})} \subset C_{\text{BSE}}(\Phi_\mathcal{A})$. Therefore is $\mathcal A$ a BSE- algebra.\\
		 Now, $(\mathrm{ii}) \Rightarrow (\mathrm{iii})$ is clear as $C_{\text{BSE}}(\Phi_\mathcal{A}) \subset \widehat{\mathcal{A}}$ by above argument. But $C_{\text{BSE}}(\Phi_\mathcal{A}) = \widehat{M(\mathcal{A})}$ implies $\widehat{M(\mathcal{A})} = \widehat{\mathcal{A}}$ that is $\mathcal A$ is unital.\\
		 $(\mathrm{iii}) \Rightarrow (\mathrm{i})$ is obvious.
	\end{proof}
	
	\begin{lem} \label{2.1} Let $G$ be a locally compact abelian group, and let $\mathcal A$ be a  commutative Banach algebra. For $\sigma \in C_{\text{BSE}}(\Phi_{L^1(G, \mathcal{A})})$ and $\psi \in \Phi_{\mathcal{A}}$, define $\sigma_{\psi} : \widehat{G} \rightarrow \mathbb{C}$ by $\sigma_{\psi}(\gamma) = \sigma(\varphi_{\gamma\psi})$ for all $\gamma \in \widehat G$. Then $\sigma_{\psi} \in C_{\text{BSE}}(\Phi_{L^1(G)})$.
	\end{lem}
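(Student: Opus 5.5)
The plan is to verify the two defining properties of a BSE-function directly: continuity and boundedness of $\sigma_\psi$ on $\widehat G$, and the BSE inequality. The key input is the identification of $\Phi_{L^1(G,\mathcal A)}$ with $\widehat G\times\Phi_{\mathcal A}$ recalled in the introduction. Here $\varphi_{\gamma\psi}$ is the character $f\mapsto \int_G \gamma(s)\,\psi(f(s))\,dm(s)$, which on elementary tensors $g\otimes a$ acts as $\widehat g(\gamma)\,\psi(a)$.

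\emph{Continuity and boundedness.} Boundedness is immediate, since $|\sigma_\psi(\gamma)|\le \|\sigma\|_\infty$. For continuity, I will check that the map $\gamma\mapsto\varphi_{\gamma\psi}$ from $\widehat G$ into $\Phi_{L^1(G,\mathcal A)}$ is continuous for the Gel'fand topologies. Because the identification $\widehat G\times\Phi_{\mathcal A}\cong\Phi_{L^1(G,\mathcal A)}$ is a homeomorphism onto the product topology, the map $\gamma\mapsto(\gamma,\psi)$ is continuous. Alternatively, one can argue directly: $\gamma\mapsto \varphi_{\gamma\psi}(f)=\widehat{(\psi\circ f)}(\gamma)$ is continuous for each $f$, since $\psi\circ f\in L^1(G)$. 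Composing with the continuous function $\sigma$ then shows that $\sigma_\psi$ is continuous.

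\emph{BSE inequality.} Take $c_1,\dots,c_n\in\mathbb C$ and $\gamma_1,\dots,\gamma_n\in\widehat G$. Then
\[
\Bigl|\sum_i c_i\sigma_\psi(\gamma_i)\Bigr| = \Bigl|\sum_i c_i\sigma(\varphi_{\gamma_i\psi})\Bigr| \le \|\sigma\|_{\text{BSE}}\Bigl\|\sum_i c_i\varphi_{\gamma_i\psi}\Bigr\|_{L^1(G,\mathcal A)^\ast}.
\]
It therefore suffices to prove
\[
\Bigl\|\sum_i c_i\varphi_{\gamma_i\psi}\Bigr\|_{L^1(G,\mathcal A)^\ast}\le \Bigl\|\sum_i c_i\gamma_i\Bigr\|_{L^1(G)^\ast}.
\]
For $f\in L^1(G,\mathcal A)$ we have $\sum_i c_i\varphi_{\gamma_i\psi}(f)=\sum_i c_i\widehat{(\psi\circ f)}(\gamma_i)$, because $\psi$ commutes with the Bochner integral. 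Since $\|\psi\|\le1$, we get $\|\psi\circ f\|_1\le\|f\|_1$. Hence
\[
\Bigl|\sum_i c_i\varphi_{\gamma_i\psi}(f)\Bigr|\le \Bigl\|\sum_i c_i\gamma_i\Bigr\|_{L^1(G)^\ast}\|\psi\circ f\|_1\le \Bigl\|\sum_i c_i\gamma_i\Bigr\|_{L^1(G)^\ast}\|f\|_1 .
\]
This gives $\|\sigma_\psi\|_{\text{BSE}}\le\|\sigma\|_{\text{BSE}}$, so $\sigma_\psi\in C_{\text{BSE}}(\Phi_{L^1(G)})$.

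The main point needing care is the identification $\varphi_{\gamma\psi}(f)=\int\gamma\,\psi\circ f\,dm$, in particular that $\psi$ passes through the Bochner integral. This holds for bounded linear functionals. On the dense subspace $L^1(G)\otimes\mathcal A$ it is clear, and it extends by continuity because $\|\varphi_{\gamma\psi}\|\le 1$. Once this is in place, the estimate is a one-line norm comparison.
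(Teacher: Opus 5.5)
Your proof is correct and follows essentially the same route as the paper. Continuity comes from the identification $\Phi_{L^1(G,\mathcal A)}\cong\widehat G\times\Phi_{\mathcal A}$, and the BSE inequality comes from the norm comparison $\|\sum_i c_i\varphi_{\gamma_i\psi}\|_{L^1(G,\mathcal A)^\ast}\le\|\sum_i c_i\gamma_i\|_{L^1(G)^\ast}$ combined with the BSE inequality for $\sigma$. The only cosmetic difference is presentation: you write the estimate via the contraction $f\mapsto\psi\circ f$ on Bochner-integrable functions, while the paper computes on a projective representation $u=\sum_m f_m\otimes a_m$ and bounds by $\sum_m\|f_m\|\|a_m\|$, which is the same slice map $\sum_m\psi(a_m)f_m$ written out.
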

	\begin{proof}
		By \cite[Theorem 2.11.2]{E}, $\Phi_{L^1(G,\mathcal{A})}=\widehat{G} \times \Phi_{\mathcal{A}}$. As $\sigma$ is continuous on  $\widehat{G} \times \Phi_{\mathcal{A}}$, $\sigma_{\psi}$ is continuous on $\widehat{G}$. For $\gamma_1, \gamma_2, \ldots, \gamma_n \in \widehat{G}$ and $c_1, c_2, \ldots, c_n \in \mathbb{C}$, we prove that $\|\sum_{i = 1}^{n} c_i\varphi_{\gamma_i\psi} \|_{L^1(G,\mathcal{A})^\ast} \leq \|\sigma\|_{\text{BSE}}\|\sum_{i = 1}^{n} c_i \gamma_i \|_{L^1(G)^\ast}$. Let $u \in L^1(G,\mathcal{A})$, and let $u= \sum_m f_m\otimes a_m$. Then
		\begin{eqnarray*}
			\left |\sum_{i = 1}^{n} c_i\varphi_{\gamma_i \psi}(u) \right | &=&  \left |\sum_{i = 1}^{n} c_i\varphi_{\gamma_i \psi}\left(\sum_m f_m\otimes a_m\right) \right | \\
			&=& \left |\sum_{i = 1}^{n} c_i \sum_m \widehat{f_m}(\gamma_i) \widehat{a_m}(\psi)\right | \\
			&=& \left |  \sum_m \left(\sum_{i = 1}^{n} c_i \gamma_i\right)(f_m)\, \widehat{a_{m}}(\psi)\right | \\
			&\leq&   \sum_m \|f_m\|\left\|\sum_{i = 1}^{n} c_i \gamma_i\right\|_{L^1(G)^\ast} \|a_{m}\|\|\psi\| \\
			&=& \left\|\sum_{i = 1}^{n} c_i \gamma_i\right\|_{L^1(G)^\ast} \|\psi\|\sum_m \|f_m\|\|a_m\|\\
			&\leq& \left\|\sum_{i = 1}^{n} c_i \gamma_i\right\|_{L^1(G)^\ast}\sum_m \|f_m\|\|a_m\|.
		\end{eqnarray*}
		This gives $\left\|\sum_{i = 1}^{n} c_i\varphi_{\gamma_i \psi}\right\|_{L^1(G,\mathcal A)^\ast}\leq \left\|\sum_{i = 1}^{n} c_i \gamma_i\right\|_{L^1(G)^\ast}$ . Since $\sigma$ is BSE, we have $\left|\sum_{i=1}^n c_i\sigma_{\psi}(\gamma_i)\right| \leq \|\sigma\|_{\text{BSE}}\|\sum_{i=1}^{n} c_i \gamma_i\|_{L^1(G)^\ast}$. Therefore $\sigma_{\psi} \in C_{\text{BSE}}(\Phi_{L^1(G)})$.
	\end{proof}
	\begin{proof}[Proof of Theorem \ref{B1}] Assume that $\mathcal A$ is a BSE- algebra. Then, by Lemma \ref{2}, $\mathcal A$ is a unital algebra. This implies that $L^1(G,\mathcal A)$ has a  bounded approximate identity. Therefore $\widehat M(G,\mathcal{A}) \subset C_{\text{BSE}}(\widehat{G}  \times \Phi_{\mathcal A})$.

Conversely, assume that $\sigma \in C_{\text{BSE}}(\widehat{G} \times \Phi_{\mathcal A})$. Then  $\sigma$ can be extended as a continuous linear functional on $\text{span}(\widehat{G} \times\Phi_{\mathcal{A}})$ in $L^1(G,\mathcal{A})^\ast$. As  $\Phi_{\mathcal A} \subset \mathcal A_{\ast}$, we have $\text{span}(\widehat{G} \times\Phi_{\mathcal{A}}) \subset \text{span}(\widehat{G} \times  \mathcal A_\ast)$ and $\sigma$ extends to a norm preserving functional on $\text{span}(\widehat{G} \times  \mathcal A_\ast)$. We shall denote this extension by $\sigma$ only.
		
		Let $\gamma \in \widehat{G}$ and let $x \in \mathcal{A_\ast}$. Define $F_{(\gamma, x)} : \overline{G} \rightarrow \mathcal A_\ast$ by $$F_{(\gamma, x)}(s) = \overline{\gamma(s)}x \quad(s \in \overline{G}).$$ Note that $F_{(\gamma, x)} \in C(\overline{G}, \mathcal A_\ast)$. Let $W = \text{span}\{F_{(\gamma,x)} : \gamma \in \widehat{G}, x \in \mathcal A_\ast \}$. Then $W\subset C(\overline G,\mathcal A_\ast)$. Define $T : W \rightarrow \mathbb{C}$ by $$T\left(\sum_{k=1}^n c_kF_{(\gamma_k, x_k)}\right)=\sum_{k=1}^n c_k \sigma(\varphi_{\gamma_k x_k}).$$
		Observe that $T$ is linear on $W$. Let $\sum_{k=1}^n c_kF_{(\gamma_k, x_k)} \in W$. Then
		\begin{eqnarray*}
			\left|T\left(\sum_{k=1}^n c_kF_{(\gamma_k, x_k)}\right)\right| & = & \left|\sum_{k=1}^n c_k \sigma(\varphi_{\gamma_k x_k}) \right|\\
			&\leq & \|\sigma\|_{\text{BSE}} \left\|\sum_{k=1}^n c_k \varphi_{\gamma_k x_k}\right\|_{L^1(G,\mathcal{A})^\ast}.
		\end{eqnarray*}
		Let $u \in L^1(G,\mathcal{A})$ with $\|u\|_{\pi} \leq 1$, and let $u=\sum_m \lambda_m f_m \otimes a_m$ with $\|f_m\| = \|a_m\|=1$ and $\sum_m |\lambda_m|  < \infty$. Then
		\begin{eqnarray*}
			\left|\sum_{k=1}^n c_k \varphi_{\gamma_k x_k}(u)\right | & = &  \left|\sum_{k=1}^n c_k \varphi_{\gamma_k x_k}\left(\sum_ m \lambda_m f_m \otimes a_m\right) \right|\\
			& = &  \left|\sum_{k=1}^n c_k \sum_m \lambda_m \widehat f_m(\gamma_k) a_m(x_k)\right|\\
			&=& \left| \sum_m \lambda_m  \sum_{k=1}^n c_k \widehat f_m(\gamma_k) a_m(x_k)\right|\\
			&\leq&  \sum_m \left| \lambda_m  \sum_{k=1}^n c_k \widehat f_m(\gamma_k) a_m(x_k)\right|\\
			&=&  \sum_m |\lambda_m|  \left| \sum_{k=1}^n c_k \widehat f_m(\gamma_k) a_m(x_k)\right|\\
			&\leq&  (\sum_m |\lambda_m|)  \sup \left \{\left | \sum_{k=1}^n c_k   \widehat{\mu}(\gamma_k) a(x_k) \right | :  \mu \in C(\overline{G})^{\ast}_1, \ a \in \mathcal{A}_1 \right\}.
		\end{eqnarray*}
		As $\|u\|_\pi\leq 1$, we have
		\begin{eqnarray*}
			\left|\sum_{k=1}^n c_k \varphi_{\gamma_k x_k}(u)\right| & \leq  & \sup \left \{\left | \sum_{k=1}^n c_k \widehat{\mu}(\gamma_k) a(x_k) \right | :  \mu \in C(\overline{G})^{\ast}_1, \ a \in \mathcal{A}_1 \right\}\\
			&=& \left\|\sum_{k=1}^n c_k F_{(\gamma_k, x_k)} \right\|_{\epsilon}.
		\end{eqnarray*}
		As $u \in L^1(G,\mathcal{A})$ is arbitrary, we have $\left \| \sum_{k=1}^n c_k \varphi_{\gamma_k  x_k} \right \|_{L^1(G,\mathcal{A})^\ast} \leq \left\| \sum_{k=1}^n c_k F_{(\gamma_k, x_k)} \right\|_{\epsilon}$. Now, using this in above inequality, we have $$\left | T\left(\sum_{k=1}^n c_kF_{(\gamma_k, x_k)}\right) \right | \leq \|\sigma\|_{\text{BSE}} \left\|\sum_{k=1}^n c_k F_{(\gamma_k, x_k)} \right\|_{\epsilon}.$$ Hence $T$ is a continuous linear functional on $W$. Therefore, by Hahn-Banach extension theorem, $T$ has a norm preserving extension to $C(\overline{G}, \mathcal{A_\ast})$. By Riesz representation theorem we get $\mu \in M(\overline{G},\mathcal{A})$ such that $$T(f)=\int_{\overline G}f(x)d\mu(x)\quad(f \in C(\overline G,\mathcal A_\ast)).$$ Let $\gamma \in \widehat G$ and $\psi \in \Phi_{\mathcal A}$. Then
		\begin{eqnarray*}
			\sigma(\varphi_{\gamma \psi})=T(F_{(\gamma,\psi)}) &= & \int_{\overline G}F_{(\gamma,\psi)}(s)d\mu(s)=\int_{\overline G}\overline{\gamma(s)}\psi d\mu(s)\\
			&=&\psi\left(\int_{\overline G}\overline{\gamma(s)} d\mu(s)\right)=\widehat \mu(\varphi_{\gamma \psi}).
		\end{eqnarray*}
		As $\overline{G}$ is compact, the function $F_{(\gamma,\psi)}(s) = \overline{\gamma(s)}\psi\;(s \in \overline G)$ is  uniformly continuous. Let $(E_{i})_{i \in I}$ be a partition of $\overline{G}$, where $I$ is a finite set. Then $\sum_{i} \gamma(s_i)\psi \mu(E_i)$ approximates the integral $\int_{\overline G}\overline{\gamma(s)}\psi d\mu(s)$. Now, $\sum_{i} \gamma(s_i)\psi \mu(E_i) = \sum_{i} \gamma(s_i)\psi (\mu(E_i))$ and as $\sum_{i} \gamma(s_i)\psi (\mu(E_i))$  approximates the integral $\int_{\overline G}\overline{\gamma(s)} d(\psi \circ \mu)(s)$, we have  $$\int_{\overline G}\overline{\gamma(s)}\psi d\mu(s) = \int_{\overline G}\overline{\gamma(s)} d(\psi \circ \mu)(s).$$
		Now, we show that $\mu$ is concentrated on $G$ and to show this we prove that  $\psi \circ \mu$ is concentrated on $G$ for all $\psi \in \Phi_{\mathcal A}$.
		Fix $\psi \in \Phi_\mathcal{A}$. Then we have $\sigma_\psi(\gamma)=\sigma(\varphi_{\gamma\psi}) = \int_{\overline{G}}\overline{\gamma(s)} \ d(\psi \circ \mu)(s)$. By  Lemma \ref{2.1}, $\sigma_\psi \in C_{\text{BSE}}(\Phi_{L^1(G)})$. By \cite[Theorem 1.9.1]{R}, the measure $|\psi \circ \mu|$ is concentrated on $G$, and it means that $\psi \circ \mu$ is concentrated at $G$. Let $E$ be a measurable subset of $G$. As $\psi \circ \mu$ is concentrated on $G$, we have $\psi(\mu(E)) = \psi(\mu(E \cap G))$ for all $\psi \in \Phi_\mathcal A$. As $\mathcal A$ is semisimple, $\mu(E) = \mu(E \cap G)$, i.e., $\mu$ is concentrated on $G$. Therefore, we have
		$$\sigma(\varphi_{\gamma\varphi}) =  \int_{G}\overline{\gamma(s)} \ d (\psi\circ \mu)(s)=\widehat \mu(\varphi_{\gamma\psi}).$$  Therefore, $C_{BSE}(\Phi_{\mathcal A}) \subset \widehat{M(G, \mathcal A)}$. Hence $L^1(G, \mathcal A)$ is a BSE- algebra. Conversely, assume that $L^1(G, \mathcal A)$ is a BSE- algebra. Then $L^1(G, \mathcal A)$ has a weak bounded approximate identity which implies that $\mathcal A$ has  a weak bounded approximate identity. Therefore, by Lemma \ref{2} we have $\mathcal A$ is a BSE- algebra.
	\end{proof}
	
\section{$L^1(G,\omega, \mathcal{A})$ is  BSE}
We now prove the Theorem \ref{B22}. We shall require some results in proving it.
\begin{lem}
	Let $\omega$ be a weight on a locally compact abelian group $G$, $\omega^{-1}$ vanish at infinity, and let $\mathcal A$ be a commutative Banach algebra with a predual $\mathcal A_\ast$. If $\gamma \in \widehat G_\omega$ and $x \in \mathcal{A}_\ast$, then $\varphi_{\gamma x}$ is a continuous linear functional on $L^1(G,\omega,\mathcal A)$, where $$\varphi_{\gamma x}\left(\sum_n f_n \otimes a_n\right)=\sum_n \widehat f_n(\gamma)a_n(x)\quad\left(\sum_n f_n \otimes a_n \in L^1(G,\omega,\mathcal A)\right).$$
\end{lem}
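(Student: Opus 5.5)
The plan is to define $\varphi_{\gamma x}$ first on the algebraic tensor product $L^1(G,\omega)\otimes\mathcal A$, bound it by the projective norm, and then extend it by continuity. By \cite[Proposition 1.5.4]{E}, $L^1(G,\omega)\otimes\mathcal A$ is dense in $L^1(G,\omega,\mathcal A)\cong L^1(G,\omega)\widehat{\otimes}_\pi\mathcal A$.

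For $\gamma\in\widehat G_\omega$ we have $|\gamma(s)|\le\omega(s)$ for all $s\in G$. Hence
$$|\widehat f(\gamma)|=\left|\int_G f(s)\overline{\gamma(s)}\,dm(s)\right|\le\int_G|f(s)|\omega(s)\,dm(s)=\|f\|_\omega .$$
For $x\in\mathcal A_\ast$ and $a\in\mathcal A=(\mathcal A_\ast)^\ast$ we have $|a(x)|\le\|a\|\,\|x\|$. Therefore the bilinear map $(f,a)\mapsto\widehat f(\gamma)\,a(x)$ on $L^1(G,\omega)\times\mathcal A$ satisfies
$$|\widehat f(\gamma)\,a(x)|\le\|x\|\,\|f\|_\omega\,\|a\|.$$
By the universal property of the projective tensor product, it induces a well-defined linear functional on $L^1(G,\omega)\otimes\mathcal A$. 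This functional is $\sum_n f_n\otimes a_n\mapsto\sum_n\widehat f_n(\gamma)a_n(x)$, and it does not depend on the chosen representation.

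Next, for any representation $u=\sum_n f_n\otimes a_n$ (finite, or absolutely convergent with $\sum_n\|f_n\|_\omega\|a_n\|<\infty$) we get
$$\left|\sum_n\widehat f_n(\gamma)a_n(x)\right|\le\|x\|\sum_n\|f_n\|_\omega\|a_n\|.$$
Taking the infimum over representations gives $|\varphi_{\gamma x}(u)|\le\|x\|\,\|u\|_\pi$. So $\varphi_{\gamma x}$ extends uniquely to a bounded linear functional on $L^1(G,\omega)\widehat\otimes_\pi\mathcal A$ with norm at most $\|x\|$. On an element written as an absolutely convergent series, the extension is still given by the series formula, because that series converges absolutely and the partial sums converge in $\|\cdot\|_\pi$. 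Transporting this through the isomorphism with $L^1(G,\omega,\mathcal A)$ gives the claim.

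The main point needing care is well-definedness, that is, independence of the representation of $u$. That is exactly what the universal property of $\widehat\otimes_\pi$ supplies. An equivalent concrete formula is $\varphi_{\gamma x}(u)=\int_G\overline{\gamma(s)}\,u(s)(x)\,dm(s)$ for the Bochner-integrable $u$. The hypothesis that $\omega^{-1}$ vanishes at infinity is not needed for continuity. Only $\omega$-boundedness of $\gamma$ and the duality $\mathcal A=(\mathcal A_\ast)^\ast$ are used.
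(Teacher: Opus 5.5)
Your proof is correct and follows the paper's route: bound each term by $|\widehat f_n(\gamma)|\,|a_n(x)|\le \|f_n\|_\omega\|a_n\|\|x\|$, sum over a representation of $u$, and take the infimum to get $|\varphi_{\gamma x}(u)|\le\|x\|\,\|u\|$. You are more careful than the paper, which takes independence of the representation for granted while you obtain it from the universal property of $\widehat{\otimes}_\pi$, and you are right that the hypothesis that $\omega^{-1}$ vanishes at infinity plays no role.
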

\begin{proof}
	Clearly, $\varphi_{\gamma x}$ is linear. Let $u \in L^1(G,\omega, \mathcal A)$ and let $u=\sum_n f_n \otimes a_n$ with $\sum_n \|f_n\|_\omega \|a_n\| < \infty$. Then
	\begin{eqnarray*}
		|\varphi_{\gamma x}(u)| & = &  \left|\varphi_{\gamma x}\left(\sum_n f_n \otimes a_n\right)\right|  =  \left|\sum_n \widehat f_n (\gamma)a_n(x)\right|\\
		& \leq & \sum_n |\widehat f_n (\gamma)| |a_n(x)|\\
		& \leq & \sum_n \|f_n\|_\omega \|a_n\| \|x\|.
	\end{eqnarray*}
	This gives $|\varphi_{\gamma x}(u)|\leq \|x\| \|u\|$. This proves the result.
\end{proof}

\begin{lem}
	Let $G$ be a locally compact abelian group, and let $\mathcal A$ be a  commutative Banach algebra with a predual $\mathcal A_\ast$. If $\epsilon >0$, then there is a neighborhood $U$ of $0$ in $\widehat G$ such that $\|\varphi_{\gamma_1 x}-\varphi_{\gamma_2 x}\|_{L^1(G,\mathcal A)^\ast}<\epsilon$ whenever $\gamma_1, \gamma_2 \in U$ and $x \in (\mathcal A_\ast)_1$.
\end{lem}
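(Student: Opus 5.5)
The plan is to compute the dual norm directly and reduce the estimate to uniform closeness of characters on a compact subset of $G$. For $u \in L^1(G,\mathcal A)$ written as $u=\sum_n f_n\otimes a_n$, the functional $\varphi_{\gamma x}$ should be given by the Bochner-integral formula
\begin{equation*}
\varphi_{\gamma x}(u)=\int_G \overline{\gamma(s)}\,\langle u(s),x\rangle\,dm(s).
\end{equation*}
This follows by checking it on elementary tensors $f\otimes a$, where it gives $\widehat f(\gamma)a(x)$, and then extending by linearity and continuity, exactly as in the lemma just proved for $\omega\equiv 1$. Consequently
\begin{equation*}
|(\varphi_{\gamma_1 x}-\varphi_{\gamma_2 x})(u)|\le \|x\|\int_G |\gamma_1(s)-\gamma_2(s)|\,\|u(s)\|\,dm(s),
\end{equation*}
and for $x\in(\mathcal A_\ast)_1$ the factor $\|x\|$ is at most $1$. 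This already gives uniformity in $x$, so everything reduces to making the scalar integral $\int_G|\gamma_1-\gamma_2|\,\|u(\cdot)\|\,dm$ small.

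Next I fix a compact set $K\subset G$ and split the integral into the parts over $K$ and over $G\setminus K$. On $K$, I use that the topology of $\widehat G$ is the compact-open topology. The set
\begin{equation*}
U=\{\gamma\in\widehat G:\ |\gamma(s)-1|<\epsilon/4 \text{ for all } s\in K\}
\end{equation*}
is then a neighbourhood of $0$ (the trivial character). For $\gamma_1,\gamma_2\in U$ we get $|\gamma_1-\gamma_2|<\epsilon/2$ on $K$, so the contribution of $K$ is at most $(\epsilon/2)\|u\|_1$. On $G\setminus K$, I bound $|\gamma_1-\gamma_2|\le 2$, which gives the term $2\int_{G\setminus K}\|u(s)\|\,dm(s)$.

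The main obstacle is this tail term: the choice of $K$ must work simultaneously for every $u$ in the unit ball of $L^1(G,\mathcal A)$. The unit ball is not uniformly tight, since translates of a fixed function escape every compact set. My first attempt is to choose $K$ after $u$, and then to argue that $U$ can be taken independent of $u$ by exploiting the translation invariance of $m$ together with the fact that $\varphi_{\gamma x}$ is a character on the translation-invariant set of $u$ being tested. If that fails, I would fall back on the estimate with $K$ adapted to the functions actually used later. In the proof of Theorem~\ref{B22}, these are compactly supported approximants, and for a fixed compactly supported $u$ the splitting above gives the required bound with $U$ depending only on $\operatorname{supp}u$ and $\epsilon$.

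I am uncertain whether the first attempt can succeed. For $G=\mathbb R$ with $\mathcal A=\mathbb C$, the dual norm $\|\gamma_1-\gamma_2\|_\infty$ equals $2$ whenever $\gamma_1\ne\gamma_2$. So the uniform statement in the $L^1(G,\mathcal A)^\ast$ norm is exactly where I would concentrate checking before relying on it.
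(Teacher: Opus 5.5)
Your closing suspicion is correct and decisive: the statement as written is false, so neither of your two routes can prove it. The paper's own proof breaks at the same point.

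\textbf{Why the statement fails.} Your $\mathbb R$ computation extends to every non-compact $G$. In that case $\widehat G$ is non-discrete, so every neighbourhood $U$ of $0$ contains a nontrivial character $\gamma$. Take $\gamma_1=\gamma$ and $\gamma_2$ the trivial character, and test on $f\otimes a$ with $\|f\|_1\le 1$, $\|a\|\le 1$. This gives $\|\varphi_{\gamma_1x}-\varphi_{\gamma_2x}\|_{L^1(G,\mathcal A)^\ast}\ge \|x\|\sup_{s\in G}|\gamma(s)-1|\ge\sqrt3\,\|x\|$. The first inequality holds because the essential supremum equals the supremum for continuous $\gamma$. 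The second holds because $\gamma(G)$ is a nontrivial subgroup of $\mathbb T$. So for $\epsilon\le\sqrt3$ and $\|x\|=1$ no $U$ exists.

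\textbf{Your two routes.} In your first attempt, translation invariance works against you: replacing $u$ by a translate $u(\cdot-t)$ multiplies $\varphi_{\gamma_1x}(u)$ and $\varphi_{\gamma_2x}(u)$ by the different unimodular constants $\overline{\gamma_1(t)}$ and $\overline{\gamma_2(t)}$. Hence $U$ cannot be made independent of $u$. Your fallback, with $U$ depending on the support of $u$, proves only weak$^\ast$ continuity of $\gamma\mapsto\varphi_{\gamma x}$, uniformly in $x\in(\mathcal A_\ast)_1$. That is a different and weaker statement. It would also not suffice where the lemma is actually needed: passing from Riemann sums to $\int_{\widehat G}g(\gamma)\varphi_{\gamma^{-1}x}\,d\gamma$ on the right-hand side of the inequality in Lemma~\ref{l4}. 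That step needs norm convergence, because weak$^\ast$ lower semicontinuity of the norm points the wrong way.

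\textbf{The paper's proof and the correct version.} The paper reduces the lemma to the scalar claim $\|\varphi_{\gamma_1}-\varphi_{\gamma_2}\|_{L^1(G)^\ast}<\epsilon$ for $\gamma_1,\gamma_2$ near $0$, citing [J, Theorem 2.2], and then passes to $u=\sum_n f_n\otimes a_n$. That tensor step is fine, but your example refutes the scalar input for unweighted $L^1(G)$ with $G$ non-compact. What is true, and what the paper's proof text (which writes $u\in L^1(G,\omega,\mathcal A)$) and Theorem~\ref{B22} need, is the weighted version when $\omega^{-1}$ vanishes at infinity:
\[
\|\varphi_{\gamma_1x}-\varphi_{\gamma_2x}\|_{L^1(G,\omega,\mathcal A)^\ast}\le\sup_{s\in G}\frac{|\gamma_1(s)-\gamma_2(s)|}{\omega(s)}\qquad(x\in(\mathcal A_\ast)_1).
\]
Your splitting argument proves this with $K$ chosen independently of $u$:
\begin{enumerate}
\item choose $K$ compact with $\omega^{-1}<\epsilon/4$ on $G\setminus K$;
\item take your $U$ for this $K$;
\item on $K$ use $\omega\ge 1$, and off $K$ use $|\gamma_1-\gamma_2|/\omega\le 2/\omega<\epsilon/2$.
\end{enumerate}
The weight supplies exactly the uniform tightness you found missing, since $\int_{G\setminus K}\|u(s)\|\,dm(s)\le\bigl(\sup_{G\setminus K}\omega^{-1}\bigr)\|u\|_{\omega}$ is small uniformly on the unit ball. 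The right repair is therefore to restate the lemma for $L^1(G,\omega,\mathcal A)^\ast$ with $\omega^{-1}$ vanishing at infinity, not to look for a sharper argument in $L^1(G,\mathcal A)^\ast$.
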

\begin{proof} Let $\epsilon > 0$. Then, by \cite[Theorem 2.2]{J}, there exists a neighborhood $U$ of $0$ such that $\| \varphi_{\gamma_1}-\varphi_{\gamma_2}\|_{L^1(G)^\ast}<\epsilon$ whenever $\gamma_1, \gamma_2 \in U$. Let $u=\sum_n f_n \otimes a_n \in L^1(G, \omega, \mathcal A)$, $x \in (\mathcal A_\ast)_1$, and let $\gamma_1, \gamma_2 \in U$. Note that $\sum_n \|f_n\||a_n(x)|\leq \sum_n \|f_n\|\|a_n\|$. We have
	\begin{eqnarray*}
		|\varphi_{\gamma_1 x}(u)-\varphi_{\gamma_2 x}(u)|&=& \left|\varphi_{\gamma_1 x}\left(\sum_n f_n\otimes a_n\right)-\varphi_{\gamma_2 x}\left(\sum_n f_n\otimes a_n\right)\right| \\
		&=& \left|\varphi_{\gamma_1}\left(\sum_n f_n a_n(x) \right)-\varphi_{\gamma_2}\left(\sum_n f_n  a_n(x)\right)\right|\\
		& \leq & \|\varphi_{\gamma_1}-\varphi_{\gamma_2}\|_{L^1(G)^\ast}\sum_n \|f_n\|\|a_n\|\\
		& \leq & \epsilon \sum_n \|f_n\|\|a_n\|.
	\end{eqnarray*}
	This gives $\|\varphi_{\gamma_1 x}-\varphi_{\gamma_2 x}\|_{L^1(G,\mathcal A)^\ast}\leq \epsilon$ whenever $\gamma_1, \gamma_2 \in U$ and $x\in (\mathcal A_\ast)_1$.
\end{proof}
\begin{lem}\label{l4}
	Let $G$ be a locally compact abelian group, and let $\mathcal A$ be a commutative Banach algebra with a predual $\mathcal A_\ast$. If $c_1, c_2,\ldots, c_n \in \mathbb{C}$, $f_1, f_2,\ldots, f_n \in L^1(\widehat G)$, $\sigma \in C_{\text{BSE}}(\Phi_{L^1(G, \omega, \mathcal{A})})$ and $x_1, x_2,\ldots,x_n \in (\mathcal{A}_\ast)_{1}$, then $$\left|\sum_{k=1}^n c_k \int_{\widehat{G}} f(\gamma) \sigma(\varphi_{\gamma^{-1} x_k}) d\gamma \right | \leq \|\sigma\|_{\text{BSE}} \left \| \sum_{k=1}^n c_k \int_{\widehat{G}} f(\gamma) \varphi_{\gamma^{-1} x_k} d\gamma  \right \|_{L^1(G,\omega,\mathcal{A})^\ast} .$$
\end{lem}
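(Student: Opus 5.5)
The plan is to obtain the integrated inequality as a limit of the pointwise BSE inequality, by approximating the Bochner integral $\int_{\widehat G} f(\gamma)\varphi_{\gamma^{-1}x_k}\,d\gamma$ with finite Riemann-type sums. The statement writes $f$ for a single function, so I read it with one $f\in L^1(\widehat G)$, or equivalently $f_k$ in the $k$-th summand; the argument is the same either way. The first reduction is by density. Both sides depend continuously on $f$ in $L^1(\widehat G)$: the left side because $\sigma$ is bounded, and the right side because $\|\varphi_{\gamma^{-1}x_k}\|\le \|x_k\|\le 1$ by the first lemma of this section. Therefore it suffices to prove the inequality for $f\in C_c(\widehat G)$.

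For $f\in C_c(\widehat G)$ with compact support $K$, fix $\epsilon>0$. By the preceding lemma, in its weighted form, I choose a neighbourhood $U$ of $0$ in $\widehat G$ such that $\|\varphi_{\gamma_1 x}-\varphi_{\gamma_2 x}\|<\epsilon$ whenever $\gamma_1-\gamma_2\in U$ and $x\in(\mathcal A_\ast)_1$. This uses translation invariance, since $\varphi_{\gamma x}$ depends on $\gamma$ through $\widehat h(\gamma)$ and modulation is an isometry. I also choose $U$ so that $f$ varies by less than $\epsilon$ on translates of $U$, and so that $\sigma(\varphi_{\cdot\, x_k})$ varies by less than $\epsilon$ on $K$-translates of $U$ for each $k=1,\dots,n$; continuity of $\sigma$ on the Gel'fand space and compactness of $K$ give this. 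I then cover $K$ by finitely many translates $\gamma_j+U$ and refine the cover to a finite Borel partition $E_1,\dots,E_m$ of $K$ with $E_j\subset \gamma_j+U$.

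Next I form the discrete sums $S_k=\sum_j f(\gamma_j)\,m(E_j)\,\varphi_{\gamma_j^{-1}x_k}$ and $s_k=\sum_j f(\gamma_j)\,m(E_j)\,\sigma(\varphi_{\gamma_j^{-1}x_k})$. The BSE inequality, applied to the finitely many functionals $\varphi_{\gamma_j^{-1}x_k}$ with coefficients $c_k f(\gamma_j)m(E_j)$, gives
\[
\Bigl|\sum_k c_k s_k\Bigr|\le \|\sigma\|_{\text{BSE}}\Bigl\|\sum_k c_k S_k\Bigr\|.
\]
For the right side, the estimate $\|\int f\varphi_{\gamma^{-1}x_k}\,d\gamma - S_k\|\le \epsilon(\|f\|_1+m(K))$ follows from the uniform continuity just arranged, and this bound is uniform in $k$ because $\|x_k\|\le 1$. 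The same argument bounds $|\int f\sigma(\varphi_{\gamma^{-1}x_k})\,d\gamma - s_k|$. Finally I let $\epsilon\to0$, which proves the inequality.

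The main obstacle is to make sense of $\int_{\widehat G} f(\gamma)\varphi_{\gamma^{-1}x}\,d\gamma$ as a Bochner integral in $L^1(G,\omega,\mathcal A)^\ast$, and to show that the map $\gamma\mapsto\varphi_{\gamma x}$ is norm continuous, uniformly for $x$ in the unit ball. For the unweighted algebra this is exactly the previous lemma. In the weighted case I would redo that lemma's proof with \cite[Theorem 2.2]{J}, since that result concerns $L^1(G,\omega)$ with $\omega^{-1}$ vanishing at infinity, and the lemma's computation transfers verbatim with $\|\cdot\|_\omega$ in place of $\|\cdot\|_1$. If norm continuity were unavailable, I would instead interpret the integral in the weak$^\ast$ sense. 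In that case I would bound the right-hand side from below by testing against individual elements $u=\sum_n f_n\otimes a_n$, where the Riemann sums converge pointwise by dominated convergence.
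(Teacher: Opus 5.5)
Your proof is correct and follows the paper's route: you reduce to $C_c(\widehat G)$ by density in $L^1(\widehat G)$, feed Riemann sums into the finite BSE inequality, and pass to the limit. Your uniform norm-continuity estimate for $\gamma\mapsto\varphi_{\gamma x}$ justifies that limit where the paper only says ``applying limit over such partitions''; it is valid because $\omega^{-1}$ vanishes at infinity, whereas the unweighted version fails for noncompact $G$.

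Two small fixes:
\begin{itemize}
\item Continuity of $\gamma\mapsto\sigma(\varphi_{\gamma x_k})$ for $x_k\in(\mathcal A_\ast)_1$ does not follow from continuity of $\sigma$ on the Gel'fand space, since these functionals are not characters. It follows from that same norm estimate together with boundedness of the Hahn--Banach extension of $\sigma$, which you already use when you apply the BSE inequality to them.
\item Your weak$^\ast$ fallback would fail. Weak$^\ast$ lower semicontinuity only gives $\|\sum_k c_k\int f\varphi_{\gamma^{-1}x_k}\,d\gamma\|\le\liminf\|\sum_k c_kS_k\|$, which is the wrong direction for the inequality you need.
\end{itemize}
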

\begin{proof} Let $g_1 \ \text{and} \ g_2$ be in $C_c(\widehat G)$ with supports contained in compact sets $K_1$ and $K_2$ respectively. Let $(A_i)_{i=1}^{n}$ and $(B_j)_{j=1}^{k}$ be partitions of $K_1$ and $K_2$ respectively.  Then
	\begin{eqnarray*}
		&& \left| c_1 \sum_{i=1}^{n} g_1(\gamma_i)\sigma(\varphi_{{\gamma}^{-1}_i x_1})m(A_i) + c_2 \sum_{j=1}^{k} g_2(\gamma_j)\sigma(\varphi_{{\gamma}^{-1}_j x_2})m(B_j)  \right|\\
		&\leq & \|\sigma\|_{\text{BSE}} \left \| c_1 \sum_{i=1}^{n} g_1(\gamma_i)m(A_i)\varphi_{{\gamma}^{-1}_i x_1} + c_2 \sum_{j=1}^{k} g_2(\gamma_j)m(B_j)\varphi_{{\gamma}^{-1}_j x_2}\right\|_{L^1(G,\omega,\mathcal{A})^\ast}.
	\end{eqnarray*}
	Now, applying limit over such partitions of $K_1$ and $K_2$ we get
	\begin{eqnarray*}
		&&\left|c_1 \int_{\widehat{G}} g_1(\gamma) \sigma(\varphi_{{\gamma}^{-1} x_1}) d\gamma+ c_2 \int_{\widehat{G}}  g_2(\gamma) \sigma(\varphi_{{\gamma}^{-1} x_2}) d\gamma \right|\\
		& \leq & \|\sigma\|_{\text{BSE}} \left \| c_1 \int_{\widehat{G}}   g_1(\gamma)\varphi_{{\gamma}^{-1} x_1} d\gamma + c_2 \int_{\widehat{G}}   g_2(\gamma)\varphi_{{\gamma}^{-1} x_1} d\gamma \right \|_{L^1(G,\omega,\mathcal{A})^\ast}.
	\end{eqnarray*}
	Applying the same process to $g_1, g_2,\ldots,g_n \in  C_c(\widehat{G})$ and $x_1,\ldots,x_n \in (\mathcal A_\ast)_1$, we get
	\begin{eqnarray*}
		\left|\sum_{i=1}^nc_i \int_{\widehat{G}} g_i(\gamma) \sigma(\varphi_{{\gamma}^{-1} x_i}) d\gamma\right| \leq \|\sigma\|_{\text{BSE}} \left \|\sum_{i=1}^n c_i \int_{\widehat{G}}   g_i(\gamma)\varphi_{{\gamma}^{-1} x_i} d\gamma\right\|_{L^1(G,\omega,\mathcal{A})^\ast}.
	\end{eqnarray*}
	Take any $\epsilon >0$. Since $C_c(\widehat G)$ is dense in $L^1(\widehat G)$, we get $g_1,\ldots,g_n \in C_c(\widehat{G})$ such that $\int_{\widehat{G}}|f_i(\gamma) - g_i(\gamma)| d\gamma < \frac{\epsilon}{n}$ for $1\leq i \leq n$. Let $M>0$ be such that $|\sigma(\varphi_{\gamma x})| \leq M$ for all $\gamma \in \widehat{G}$ and $x \in (\mathcal{A}_{\ast})_1$. So,
	$$\left | \sum_{i=1}^{n} \int_{\widehat{G}}   g_i(\gamma)\sigma(\varphi_{{\gamma}^{-1} x_i}) d\gamma - \sum_{i=1}^{n} \int_{\widehat{G}}   f_i(\gamma)\sigma(\varphi_{{\gamma}^{-1} x_i}) d\gamma \right | \leq M\epsilon$$
	and hence
	$$\left | \sum_{i=1}^{n} \int_{\widehat{G}}   f_i(\gamma)\sigma(\varphi_{{\gamma}^{-1} x_i}) d\gamma \right | \leq  \left | \sum_{i=1}^{n} \int_{\widehat{G}}   g_i(\gamma)\sigma(\varphi_{{\gamma}^{-1} x_i}) d\gamma \right |  + M\epsilon.$$
	Now, \begin{eqnarray*}
		\left \| \sum_{i=1}^{n} \int_{\widehat{G}}   (g_i(\gamma)-f_i(\gamma))\varphi_{{\gamma}^{-1} x_i} d\gamma\right \|_{L^1(G,\omega,\mathcal{A})^\ast} & \leq & \sum_{i=1}^{n} \int_{\widehat{G}}|g_i(\gamma) - f_i(\gamma)|\|\varphi_{{\gamma}^{-1} x_i}\|_{L^1(G,\omega,\mathcal{A})^\ast} d\gamma\\
		& < & \epsilon
	\end{eqnarray*}
	and hence
	\begin{eqnarray*}
		\left \| \sum_{i=1}^{n} \int_{\widehat{G}}  g_i(\gamma)\varphi_{{\gamma}^{-1} x_i} d\gamma \right \|_{L^1(G,\omega,\mathcal{A})^\ast} \leq \epsilon + \left \| \sum_{i=1}^{n} \int_{\widehat{G}}   f_i(\gamma)\varphi_{{\gamma}^{-1} x_i)} d\gamma \right \|_{L^1(G,\omega,\mathcal{A})^\ast}.
	\end{eqnarray*}
	Finally,
	\begin{eqnarray*}
		\left | \sum_{i=1}^{n} \int_{\widehat{G}}   f_i(\gamma)\sigma(\varphi_{{\gamma}^{-1} x_i}) d\gamma \right | & < & M\epsilon + \left |  \sum_{i=1}^{n} \int_{\widehat{G}} g_i(\gamma)\sigma(\varphi_{{\gamma}^{-1} x_i}) d\gamma  \right | \\
		& \leq & M\epsilon + \|\sigma\|_{\text{BSE}} \left \|  \sum_{i=1}^{n} \int_{\widehat{G}}   g_i(\gamma)\varphi_{{\gamma}^{-1} x_i} d\gamma  \right \|_{L^1(G,\omega,\mathcal{A})^\ast} \\
		& \leq & M\epsilon + \epsilon\|\sigma\|_{\text{BSE}} \\
		&& + \|\sigma\|_{\text{BSE}} \left \|  \sum_{i=1}^{n} \int_{\widehat{G}}   f_i(\gamma)\varphi_{{\gamma}^{-1} x_i} d\gamma \right \|_{L^1(G,\omega,\mathcal{A})^\ast}.
	\end{eqnarray*}
	This proves the result.
\end{proof}

\begin{lem}\cite[Lemma 2.4]{J}
	Let $G$ be a locally compact abelian group, and let $$\mathscr{F} = \left\{f \in  C_{c}(G) :\widehat f \in L^1(\widehat G),\, f(s) = \int_{\widehat G}\widehat{f}(\gamma)\gamma(s)d\gamma\;(s \in G)\right\}.$$  Then $ \mathscr{F}$ is a dense subspace in $C_{0}(G)$.
\end{lem}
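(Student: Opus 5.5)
We need to prove that $\mathscr F$ is a linear subspace of $C_0(G)$ and that it is dense in the sup norm. Linearity is immediate, since both conditions $\widehat f\in L^1(\widehat G)$ and the pointwise inversion formula are preserved under linear combinations. Also $\mathscr F\subset C_c(G)\subset C_0(G)$. So everything reduces to density, and for that it suffices to approximate an arbitrary $g\in C_c(G)$ uniformly by elements of $\mathscr F$, because $C_c(G)$ is dense in $C_0(G)$.

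The plan is to convolve with a compactly supported function of positive-definite type. Fix $g\in C_c(G)$ and $\epsilon>0$. Since $g$ is uniformly continuous, there is a symmetric compact neighborhood $V$ of $0$ with $|g(s-t)-g(s)|<\epsilon$ for all $s\in G$ and $t\in V$. Choose a compact symmetric neighborhood $W$ with $W+W\subset V$, and put
$$h=\frac{1}{m(W)^2}\,\chi_W\star\chi_W .$$
Then $h$ is continuous, nonnegative, supported in $W+W\subset V$, and $\int_G h\,dm=1$. Moreover $\widehat h=|\widehat{\chi_W}|^2/m(W)^2\ge 0$, and by Plancherel $\widehat h\in L^1(\widehat G)$. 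Since $h$ is a continuous positive-definite function, the inversion theorem \cite[\S1.5]{R} gives $h(s)=\int_{\widehat G}\widehat h(\gamma)\gamma(s)\,d\gamma$.

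Now set $f=g\star h$. Then:
\begin{enumerate}[label=(\roman*)]
\item $f\in C_c(G)$, since its support lies in $\operatorname{supp} g+V$.
\item $\widehat f=\widehat g\,\widehat h$, and $\widehat g$ is bounded while $\widehat h\in L^1(\widehat G)$, so $\widehat f\in L^1(\widehat G)$.
\item Fubini, justified because $g\in L^1(G)$ and $\widehat h\in L^1(\widehat G)$, gives
$$f(s)=\int_G g(t)\int_{\widehat G}\widehat h(\gamma)\gamma(s-t)\,d\gamma\,dt=\int_{\widehat G}\widehat g(\gamma)\widehat h(\gamma)\gamma(s)\,d\gamma .$$
Here I use the paper's convention $\widehat g(\gamma)=\int_G g(t)\overline{\gamma(t)}\,dt$.
\end{enumerate}
Hence $f\in\mathscr F$. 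Finally, since $h\ge0$, $\int_G h\,dm=1$, $\operatorname{supp}h\subset V$ and $V$ is symmetric,
$$|f(s)-g(s)|\le\int_V h(t)\,|g(s-t)-g(s)|\,dt\le\epsilon \qquad (s\in G).$$

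The only delicate point is the inversion formula for $h$, that is, knowing that $\widehat h\in L^1(\widehat G)$ and that the inversion holds everywhere and not just almost everywhere. Both follow from the explicit form $h=\chi_W\star\chi_W^{\ast}$ (here $\chi_W^{\ast}=\chi_W$ because $W$ is symmetric), which makes $h$ a continuous function of positive type, together with the classical inversion theorem for such functions. Everything else is routine.
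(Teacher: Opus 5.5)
The paper does not prove this lemma; it is quoted from \cite[Lemma 2.4]{J}, so there is no in-paper argument to compare yours with. Your proof is correct and self-contained. The function $h=m(W)^{-2}\chi_W\star\chi_W$ is continuous, nonnegative, supported in $W+W\subset V$, and has integral $1$. By Plancherel, $\widehat h=m(W)^{-2}|\widehat{\chi_W}|^2\in L^1(\widehat G)$. Hence $f=g\star h\in C_c(G)$ and $\widehat f=\widehat g\,\widehat h\in L^1(\widehat G)$, the Fubini step is legitimate, and $\|f-g\|_\infty\le\epsilon$.

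Three minor points:
\begin{enumerate}[label=(\roman*)]
\item The everywhere-inversion for $h$ does not need Bochner's theorem. By Parseval, $m(W)^2h(x)=\int_G\chi_W(y)\chi_W(x-y)\,dy=\int_{\widehat G}\widehat{\chi_W}(\gamma)^2\gamma(x)\,d\gamma$ for every $x$, because the Fourier transform of $y\mapsto\chi_W(x-y)$ is $\overline{\gamma(x)}\,\overline{\widehat{\chi_W}(\gamma)}$.
\item You should state explicitly that $d\gamma$ is the Haar measure on $\widehat G$ normalized so that the Plancherel and inversion theorems hold. Both your Plancherel step and your inversion step use this normalization, and with any other normalization $\mathscr F$ would reduce to $\{0\}$.
\item Your argument gives uniform approximation of each $g\in C_c(G)$ by elements of $\mathscr F$. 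Since $\omega\ge 1$ gives $\|\cdot\|_{\omega^{-1}}\le\|\cdot\|_\infty$, and $C_c(G)$ is dense in $C_0(G,\omega^{-1})$, this also yields density of $\mathscr F$ in $C_0(G,\omega^{-1})$. That weighted density is what the paper actually uses in Lemma~\ref{Dense}.
\end{enumerate}
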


\begin{lem} \label{Dense}  If $\omega$ is a weight on a locally compact abelian group $G$, and $\mathcal A$ is a commutative Banach algebra with a predual $\mathcal A_\ast$, then $$\mathscr{F}_{\mathcal A} = \left\{\sum\limits_{k=1}^{n} f_k \otimes x_k :f_k \in \mathscr{F}, x_k \in \mathcal{A_\ast}, n \in \mathbb{N} \right\}$$ is a dense subspace of $C_0(G, {\omega}^{-1}, \mathcal{A_\ast})$.
\end{lem}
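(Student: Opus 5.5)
We prove density by reducing it, in two steps, to the scalar density of $\mathscr{F}$ in $C_0(G)$ from \cite[Lemma 2.4]{J}. The first step uses the structure of the weighted space and the second uses the injective tensor product.

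The first step removes the weight. The map $\Theta: C_0(G,\mathcal A_\ast)\to C_0(G,\omega^{-1},\mathcal A_\ast)$, $\Theta(g)=\omega g$, is an isometric isomorphism. Here the norm on $C_0(G,\omega^{-1},\mathcal A_\ast)$ is $\|f\|=\sup_s \|f(s)\|/\omega(s)$, and $\omega$ is continuous by the Remark. So it suffices to approximate $g=f/\omega$ in the sup norm by functions whose product with $\omega$ lies in $\mathscr{F}_{\mathcal A}$. This is awkward, since $\omega\cdot\mathscr F$ is not $\mathscr F$. We therefore argue directly instead.

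Let $f\in C_0(G,\omega^{-1},\mathcal A_\ast)$ and $\epsilon>0$. Since $f/\omega$ vanishes at infinity, choose a compact $K$ with $\|f(s)\|/\omega(s)<\epsilon$ off $K$. Use the isomorphism $C_0(G,\mathcal A_\ast)\cong C_0(G)\widehat\otimes_\epsilon \mathcal A_\ast$ quoted from \cite[Proposition 1.5.6]{E}, or directly a partition of unity on $K$ together with uniform continuity of $f$ on $K$. This gives $h=\sum_{k=1}^n h_k\otimes x_k$ with $h_k\in C_c(G)$, $x_k\in\mathcal A_\ast$, and $\sup_{s}\|f(s)\omega(s)^{-1}-h(s)\omega(s)^{-1}\|<\epsilon$. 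To arrange this, approximate $f$ uniformly on a compact neighbourhood $K'$ of $K$ by $\sum_k \phi_k f(s_k)$, where $(\phi_k)$ is a partition of unity subordinate to a fine cover. Note that $\omega\ge1$, so the sup-norm error controls the weighted error on $K'$. Outside $K$, both $f/\omega$ and $h/\omega$ are small because $\|h(s)\|\le \sup_k\|f(s_k)\|$ for near points. This needs a little care: choose the cover fine enough that $\|f(s_k)\|/\omega(s)$ stays below $2\epsilon$ near the boundary, using local boundedness of $\omega$ and $\omega^{-1}\le1$.

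The second step replaces each $h_k\in C_c(G)$ by an element of $\mathscr F$, and this is where we expect the main obstacle. Density of $\mathscr F$ in $C_0(G)$ is only in the sup norm, while in the weighted norm $\|(h_k-f_k)x_k\|=\sup_s|h_k(s)-f_k(s)|\,\|x_k\|/\omega(s)$. Since $\omega\ge1$, this is at most $\|h_k-f_k\|_\infty\|x_k\|$. So the weight only helps, and we can pick $f_k\in\mathscr F$ with $\|h_k-f_k\|_\infty<\epsilon/(n(1+\|x_k\|))$. It remains to check that each $f_k\otimes x_k$, namely $s\mapsto f_k(s)x_k$, actually lies in $C_0(G,\omega^{-1},\mathcal A_\ast)$. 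It does: $f_k$ has compact support and is continuous, and $\omega$ is continuous and at least $1$. Combining the two steps with the triangle inequality gives $\|f-\sum_k f_k\otimes x_k\|<3\epsilon$, and $\mathscr F_{\mathcal A}$ is clearly a linear subspace, which proves the lemma.
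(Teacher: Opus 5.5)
Your proof is correct, but it is organized differently from the paper's. The paper only treats an element $u=\sum_{k=1}^n f_k\otimes x_k$ of the algebraic tensor product $C_0(G,\omega^{-1})\otimes\mathcal A_\ast$. It implicitly relies on the identification $C_0(G,\omega^{-1},\mathcal A_\ast)\cong C_0(G,\omega^{-1})\widehat{\otimes}_\epsilon\mathcal A_\ast$ of \cite[Proposition 1.5.6]{E} to know such elements are dense. It then replaces each $f_k$ by some $g_k\in\mathscr F$, asserting that $\mathscr F$ is dense in $C_0(G,\omega^{-1})$, and bounds the injective norm by testing against pairs of unit-ball functionals. You instead build the finite-rank approximant by hand with a partition of unity, so its scalar coefficients lie in $C_c(G)$. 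Only then do you use the sup-norm density of $\mathscr F$ in $C_0(G)$, together with $\|\cdot\|_{\omega^{-1}}\le\|\cdot\|_\infty$ (from $\omega\ge 1$).

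Your route buys two things. First, it avoids the tensor-product identification entirely. Second, it actually proves the weighted density that the paper uses without justification. \cite[Lemma 2.4]{J} gives density of $\mathscr F$ in $C_0(G)$ only, whereas $C_0(G,\omega^{-1})$ contains unbounded functions as soon as $\omega$ is unbounded (e.g.\ $\omega^{1/2}$ under the paper's hypothesis that $\omega^{-1}$ vanishes at infinity). So a truncation step like your Step 1 is genuinely needed. The paper's version is shorter because it isolates the vector-valued part as a one-line injective-norm estimate.

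Two small points on your write-up. The boundary estimate needs nothing beyond uniform continuity of $f$ on $K$ and $\omega\ge1$. If $\phi_k(s)\neq 0$ then $\|f(s_k)\|\le\|f(s)\|+\delta$, so for $s\notin K$ you get $\|h(s)\|/\omega(s)<\epsilon+\delta$; local boundedness of $\omega$ is not used. Also, the opening digression via $\Theta(g)=\omega g$ plays no role and can be dropped.
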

\begin{proof}
	Clearly, $\mathscr{F}_{\mathcal A}$ is a subspace of $C_0(G, {\omega}^{-1}, \mathcal{A_\ast})$.  Let $u =  \sum_{k=1}^n f_k \otimes x_k \in C_0(G, \omega^{-1}, \mathcal{A_\ast})$, and let $\epsilon >0$. Since $\mathscr{F}$ is dense in $C_0(G,{\omega}^{-1})$, there exist $g_1, g_2,\ldots, g_n \in \mathscr{F}$ such that $\|f_k - g_k\|_{\omega^{-1}} < \frac{\epsilon}{n(1+ \|x_k\|)}$ for all $1\leq k \leq n$.
	Now, let $\varphi \in (C_0(G))_{1}^\ast$ and let $\psi \in (\mathcal A_\ast)_{1}^\ast=\mathcal A_1$. Then
	\begin{eqnarray*}
		\left | (\varphi,\psi)\left(\sum_{k=1}^n f_k \otimes x_k\right) -  (\varphi,\psi)\left(\sum_{k=1}^n g_k \otimes x_k\right) \right | & = & \left | \sum_{k=1}^n \varphi(f_k - g_k)  \psi(x_k)  \right | \\
		& \leq &  \sum_{k=1}^n |\varphi(f_k - g_k)  \psi(x_k)  | \\
		& \leq & \sum_{k=1}^n \|f_k - g_k\|_{\omega^{-1}}  \|x_k\| \\
		& < & \sum_{k=1}^n \frac{\epsilon}{n(1 + \|x_k\|)} \|x_k\| \\
		& < & \epsilon.
	\end{eqnarray*}
	Now, applying supremum over such $(\varphi, \psi)$ we get the required result.
\end{proof}

We shall denote the projective norm on $L^1(G,\omega) \widehat{\otimes}_\pi \mathcal A$ by $\|\cdot\|_{\pi \omega}$, projective norm on $L^1(G)\widehat{\otimes}_\pi \mathcal A$ by $\|\cdot \|_\pi$, the injective norm on $C_0(G,\omega^{-1},\mathcal A_\ast)$ by $\|\cdot\|_{\epsilon \omega^{-1}}$ and the injective norm on $C_0(G,\mathcal A_\ast)$ by $\|\cdot\|_\epsilon$.

\begin{proof}[Proof of Theorem \ref{B22}]
Assume that $\mathcal A$ is a BSE-algebra. Then by Lemma \ref{2} $\mathcal A$ is unital. Therefore $L^1(G, \omega, \mathcal A)$ has a bounded approximate identity. Hence, by \cite[Corollary 5]{T}, $\widehat{M}(L^1(G,\omega,\mathcal{A})) \subset  C_{\text{BSE}} (\Phi_{L^1(G,\omega,\mathcal{A}})$.   Now, let $\sigma \in C_{\text{BSE}} (\Phi_{L^1(G,\omega,\mathcal{A}})$. Then for all  $n \in \mathbb N$, $c_1,c_2,\ldots,c_n \in \mathbb C$, $\gamma_1,\gamma_2,\ldots,\gamma_n \in \Phi_{L^1(G,\omega)}$, and $\psi_1, \psi_2,\ldots,\psi_n \in \Phi_{\mathcal{A}}$, we have
\begin{eqnarray}\label{BSE}
\left|\sum_{i=1}^n c_i\sigma(\varphi_{\gamma_i \psi_i})\right|\leq \|\sigma\|_{\text{BSE}}\left\|\sum_{i=1}^nc_i\varphi_{\gamma_i\psi_i}\right\|_{L^1(G,\omega, \mathcal{A})^\ast}.
\end{eqnarray}
	Note that if $f \in L^1(G,\omega)$ and $\|f\|_\omega \leq 1$, then $\|f\|_1\leq 1$. This implies that $\{u\in L^1(G,\omega)\otimes_{\pi}\mathcal A:\|u\|_{\pi ,\omega}\leq 1\} \subset \{u\in L^1(G) \widehat{\otimes}_{\pi}\mathcal A:\|u\|_\pi \leq 1\}$. So, if $\Phi \in L^1(G, \mathcal{A})^\ast \cap L^1(G,\omega, \mathcal{A})^\ast$, then $\|\Phi\|_{L^1(G,\omega, \mathcal{A})^\ast}=\sup\{|\Phi(u)|: u \in L^1(G,\omega, \mathcal{A}), \|u\|_{\pi \omega} \leq 1\}\leq \sup\{|\Phi(u)|:u \in L^1(G,\mathcal{A}), \|u\|_\pi\leq 1\}=\|\Phi\|_{L^1(G,\mathcal{A})^\ast}$. Let $\gamma_1,\gamma_2,\ldots,\gamma_n \in \widehat G$, $\psi_1,\psi_2,\ldots,\psi_n \in \Phi_{\mathcal{A}}$ and $c_1,c_2,\ldots,c_n \in \mathbb C$. Observe that $\sum_{i=1}^nc_i\varphi_{\gamma_i\psi_i} \in L^1(G,\mathcal{A})^\ast \cap L^1(G,\omega,\mathcal{A})^\ast$. So, by inequality (\ref{BSE}), we have
	\begin{eqnarray*}
		\left|\sum_{i=1}^n c_i\sigma(\varphi_{\gamma_i\psi_i})\right| \leq  \|\sigma\|_{\text{BSE}}\left\|\sum_{i=1}^nc_i \varphi_{\gamma_i\psi_i}\right\|_{L^1(G,\omega, \mathcal{A})^\ast} \leq \|\sigma\|_{\text{BSE}}\left\|\sum_{i=1}^nc_i \varphi_{\gamma_i\psi_i}\right\|_{{L^1(G, \mathcal A)}^\ast}.
	\end{eqnarray*}
	It means that $\sigma$ is a BSE- function  on $\Phi_{L^1(G, \mathcal{A})}$. Since $L^1(G,\mathcal{A})$ is a BSE- algebra, by Theorem \ref{B1},  there is $u \in M(G,\mathcal{A})$ such that
	\begin{equation*}
		\sigma(\varphi_{\gamma \psi}) =  \widehat u(\varphi_{\gamma \psi}) \quad(\gamma \in \widehat G, \psi \in \Phi_{\mathcal A}).
	\end{equation*}
We prove that $u \in M(G, \omega, \mathcal{A})$, the dual space of $C_0(G,\omega^{-1},\mathcal{A_\ast})$. We show that the map $\sum_{k=1}^n f_k \otimes x_k \mapsto \sum_{k=1}^n \int_{G} f_k(s)  d (x_k\circ \mu)(s) $ from $\mathscr F_{\mathcal A}$ to $\mathbb C$ is a bounded linear map, where $\mathscr{F}_{\mathcal A}$ is as in Lemma \ref{Dense}. Let us call this map $\eta$, i.e.,
\begin{eqnarray*}
\eta\left(\sum_{k=1}^n f_k \otimes x_k\right) = \sum_{k=1}^n \int_{G} f_k(s)  d (x_k\circ \mu)(s).
\end{eqnarray*}
The above integral makes sense as each $f_k$ is bounded and $ x_k \circ \mu  \in M(G)$ for all $1\leq k \leq n$. Observe that $\eta$ is linear on $\mathscr F_\mathcal A$. Let $\sum_{k=1}^n f_k \otimes x_k \in \mathscr F_{\mathcal A}$. Then
	\begin{eqnarray}\label{e}
		\left | \eta\left(\sum_{k=1}^n f_k \otimes x_k\right) \right | &=& \left |\sum_{k=1}^n \int_{G} f_k(s)d(x_k\circ \mu)(s)\right | \nonumber\\
		& = & \left | \sum_{k=1}^n \int_{G} \left (\int_{\widehat{G}} \widehat{f}_k(\gamma) \gamma (s) d\gamma \right ) d(x_k\circ \mu)(s)\right | \nonumber\\
		&=& \left | \sum_{k=1}^n \int_{\widehat G} \widehat{f}_k(\gamma) \left (\int_{G}  \gamma (s) d(x_k\circ \mu)(s) \right )d\gamma   \right | \nonumber\\
		&=& \left | \sum_{k=1}^n \int_{\widehat G} \widehat{f}_k(\gamma)  \left (\int_{G}  \gamma (s)x_k d\mu(s) \right )d\gamma  \right | \quad[\because \text{Fubini's theorem}] \nonumber \\
		&=& \left | \sum_{k=1}^n \int_{\widehat{G}} \widehat{f}_k(\gamma) \sigma(\varphi_{{\gamma}^{-1} x_k})d\gamma \right |  \nonumber\\
		&\leq& \|\sigma\|_{\text{BSE}} \left \| \sum_{k=1}^n \int_{\widehat{G}}  {\widehat{f}_k(\gamma)} \varphi_{{\gamma}^{-1} x_k}d\gamma \right \|_{L^1(G, \omega ,\mathcal{A})^\ast}
	\end{eqnarray}
The last inequality holds because of Lemma $\ref{l4}$. For $g \in L^1(G,\omega)$, define $\Theta_g(f)=\int_G f(s)g(s)ds\;(f \in C_0(G,\omega^{-1}))$. Then $|\Theta_g(f)|=\left|\int_G \frac{f(s)}{\omega(s)}g(s)\omega(s)ds\right|\leq \|f\|_{\infty\omega^{-1}}\|g\|_\omega$ for all $f \in C_0(G,\omega^{-1})$. So, $\Theta_g$ is a continuous linear functional on $C_0(G,\omega^{-1})$ having norm at most $\|g\|_\omega$. Let $g = \sum_m  g_m \otimes a_m \in L^1(G,\omega,\mathcal{A})$. Then
	\begin{eqnarray*}
		\left|\sum_{k=1}^n\left(\int_{\widehat G} {\widehat f_k(\gamma)}\varphi_{{\gamma}^{-1} x_k} d\gamma \right)(g)\right|& = & \left|\sum_{k=1}^n\int_{\widehat G} {\widehat f_k(\gamma)}\varphi_{{\gamma}^{-1} x_k}(g)d\gamma \right| \\
		&=&\left|\sum_{k=1}^n\int_{\widehat G} {\widehat f_k(\gamma)} g(\varphi_{{\gamma}^{-1} x_k})d\gamma\right|\\
		& = & \left|\sum_{k=1}^n\int_{\widehat G} {\widehat f_k(\gamma)}\left(\sum_m \widehat{g_m}({\gamma}^{-1}) a_m(x_k)\right)d\gamma\right|\\
		& = & \left|\sum_{k=1}^n\int_{\widehat G} {\widehat f_k(\gamma)}\left(\sum_m a_m(x_k)\int_{G} g_m(s) {\gamma(s)} ds \right) d\gamma\right|\\
		& = & \left| \sum_m \int_{G}g_m(s)\sum_{k=1}^n\left(\int_{\widehat G} {\widehat f_k(\gamma)} {\gamma(s)}d\gamma\right) a_m(x_k)ds\right|\\
		& = & \left|\sum_m \int_{G}g_m(s) \sum_{k=1}^n {f_k(s)} a_m(x_k) ds\right|\\
		& \leq & \sum_m \left|\sum_{k=1}^n a_m(x_k)\int_{G}g_m(s) {f_k(s)}  ds\right|\\
		& = & \sum_m \left|\sum_{k=1}^n \Theta_{g_m}(f_k) a_m(x_k) ds\right| \\
		& \leq & \sum_m  \|g_m\|_{\omega} \|a_m\| \left\|\sum_{k=1}^n  f_k \otimes x_k\right\|_{\epsilon, \omega^{-1}} \\
		& \leq & \|g\|_{\pi \omega} \|\sum_{k=1}^n  f_k \otimes a_k\|_{\epsilon, \omega^{-1}}.
	\end{eqnarray*}
	This proves that $$\left \|\sum_{k=1}^n \int_{\widehat{G}} \widehat{f}_k(\gamma) \varphi_{({\gamma}^{-1},x_k)}d\gamma \right\|_{L^1(G, \omega ,\mathcal{A})^\ast}\leq \|\sum_{k=1}^n  f_k \otimes a_k\|_{\epsilon, \omega^{-1}}$$ and hence, by the inequality (\ref{e}),  $$\left|\eta\left(\sum_{k=1}^n f_k \otimes x_k\right)\right |\leq \|\sigma\|_{\text{BSE}}\|\sum_{k=1}^n f_k \otimes a_k\|_{\epsilon, \omega^{-1}}$$ for all  $\sum_{k=1}^n f_k \otimes x_k \in \mathscr F_\mathcal A$. As $\mathscr F_\mathcal A$ is dense in $(C_0(G,\omega^{-1},\mathcal{A}_\ast),\|\cdot\|_{\epsilon, \omega^{-1}})$, $\eta$ has a unique norm preserving extension on $C_0(G,\omega^{-1},\mathcal{A}_\ast)$ and this extension will be $$\eta(f)=\int_G f(s)du(s)\quad(f \in C_0(G,\omega^{-1},\mathcal{A}_\ast).$$
	This implies that $ u\in M(G,\omega,\mathcal{A})$. Hence $L^1(G, \omega, \mathcal A)$ is a BSE- algebra. Conversely, assume that $L^1(G, \omega, \mathcal A)$ is a BSE- algebra. Then $\mathcal A$ has a weak bounded approximate identity. Therefore, by Lemma \ref{2} $\mathcal A$ BSE- algebra. This completes the proof.

\end{proof}

\section{Examples}
In this section we give some examples of unital commutative Banach algebras $\mathcal A$ having predual $\mathcal A_\ast$ and the Gel'fand space $\Phi_{\mathcal A}$ is contained in the predual $\mathcal A_\ast$.

\begin{enumerate}
	\item Let $X$ be hyperstonean. Then, by \cite[Theorem. 6.4.1]{DL},  $C(X)$ has the predual namely $N(X)$, the collection of all regular normal measures on $X$. Clearly, $\Phi_{C(X)}= X$ is embedded in $N(X)$ via mapping $x \mapsto \delta_x$, where $\delta_x$ is the Dirac measure concentrated at $x$. Clearly, $\Phi_{C(X)}= X$ is embedded in $N(X)$ via mapping $x \mapsto \delta_x$, where $\delta_x$ is the Dirac measure concentrated at $x$.
	
	\item Consider the semigroup $(\mathbb N_0,+)$. Let $\omega$ be a weight on $\mathbb N_0$ with $\omega(n)\to \infty$ as $n \to \infty$ and $\lim_{n\to \infty}\omega(n)^{\frac{1}{n}}=1$. Then $\ell^1(\mathbb N_0,\omega)$ is a unital, semisimple commutative Banach algebra, $\Phi_{\ell^1(\mathbb N_0,\omega)}=\{\varphi_z:|z|\leq 1\}$. The complex homomorphism $\varphi_z$ corresponds to the sequence $(1,z,z^2,\ldots)$. Note that $c_0(\mathbb N_0,\omega^{-1})^\ast=\ell^1(\mathbb N_0,\omega)$ and $\varphi_z \in c_0(\mathbb N_0,\omega^{-1})$.
	
	\item Consider $\mathbb N$ with the multiplication binary operation. Let $\omega$ be a weight on $\mathbb N$ such that $\omega(n)\to \infty$ as $n\to \infty$ and $\lim_{n\to \infty}\omega(k^n)^{\frac{1}{n}}=1$ for all $k \in \mathbb N$. Then $\ell^1(\mathbb N,\omega)$ is a unital Banach algebra with convolution multiplication. The Gel'fand space $\Phi_{\ell^1(\mathbb N,\omega)}$ can be identified with $\overline {\mathbb D}^\infty=\overline{\mathbb D}\times \overline{\mathbb D} \times \overline{\mathbb D} \times \cdots$, where $\overline{\mathbb D}=\{z\in \mathbb C:|z|\leq 1\}$ \cite{GL}. A map $\varphi$ is a complex homomorphism on $\ell^1(\mathbb N,\omega)$ if and only if there exists a semicharacter $\chi$ on $\mathbb N$ such that $\chi(1)=1$, $\chi(p_i)=z_i$ for all $i$, $|z_i|\leq 1$ and $\varphi=\varphi_\chi$, where $\{p_1,p_2,\ldots\}$ is the sequence of all primes written in increasing order and $\varphi_\chi(f)=\sum_{n=1}^\infty f(n)\chi(n)$ for all $f \in \ell^1(\mathbb N,\omega)$. If $n=p_{i_1}^{\alpha_1}\cdots p_{i_k}^{\alpha_k}$, then $\chi(n)=z_{i_1}^{\alpha_1}\cdots z_{i_k}^{\alpha_k}$. So, $\varphi_\chi$ can be seen as a sequence whose $n$-th term is $z_{i_1}^{\alpha_1}\cdots z_{i_k}^{\alpha_k}$. Since $|z_{i_j}|\leq 1$ for all $j$, we have $\frac{|\chi(n)|}{\omega(n)}\to 0$ as $n \to \infty$. Thus the Gel'fand space $\Phi_{\ell^1(\mathbb N,\omega)}$ is contained in $c_0(\mathbb N,\omega^{-1})$ which is a predual of $\ell^1(\mathbb N,\omega)$.

	\item Let $G$ be a discrete abelian group, let $\omega$ be an admissible weight on $G$, i.e., $\lim_{n\to \infty}\omega(s^n)^{\frac{1}{n}}=1\;(s \in G)$, and let  $\omega^{-1}$ vanish at infinity. Then the Banach algebra $\ell^1(G,\omega)$ has the predual $c_{0}(G, \omega^{-1})$. Note that $\Phi_{\ell^1(G,\omega)}=\{\varphi_\chi:\chi:G\to \mathbb T \text{ is a homomorphism}\}$, where $\varphi_\chi(f)=\sum_{s\in G}f(s)\chi(s)\;(f \in \ell^1(G,\omega))$. We have an identification of $\varphi_\chi\in \Phi_{\ell^1(G,\omega)}$ with $\chi\in c_0(G,\omega^{-1})$. The element $\chi$ will be in $c_0(G,\omega^{-1})$ as $\omega^{-1}$ vanishes at infinity.
	
	\item Every unital reflexive Banach algebra. In particular, any finite dimensional unital commutative Banach algebra.
\end{enumerate}

	\end{document}